
\documentclass[11pt]{amsart}

\usepackage{amsmath,amssymb}
\usepackage{amsthm}
\usepackage{hyperref}
\usepackage[margin=1.0in]{geometry}

\numberwithin{equation}{section}

\newtheorem{prop}{Proposition}
\newtheorem{lemma}[prop]{Lemma}

\newtheorem{thm}[prop]{Theorem}
\newtheorem{cor}[prop]{Corollary}

\numberwithin{prop}{section}

\theoremstyle{definition}
\newtheorem{defn}[prop]{Definition}
\newtheorem{ex}[prop]{Example}
\newtheorem{rmk}[prop]{Remark}

\newcommand{\del}{\partial}
\newcommand{\delb}{\bar{\partial}}\newcommand{\dt}{\frac{\partial}{\partial t}}
\newcommand{\sdt}{\tfrac{\partial}{\partial t}}
\newcommand{\brs}[1]{\left| #1 \right|}

\newcommand{\gs}{\sigma}

\newcommand{\gw}{\omega}

\newcommand{\N}{\nabla}

\newcommand{\til}[1]{\widetilde{#1}}

\renewcommand{\bar}[1]{\overline{#1}}

\renewcommand{\i}{\sqrt{-1}}

\newcommand{\IP}[1]{\left<#1\right>}

\newcommand{\JJ}{\mathbf{J}}
\newcommand{\GG}{\mathbf{G}}

\DeclareMathOperator{\Rc}{Rc}

\DeclareMathOperator{\tr}{tr}

\DeclareMathOperator{\Img}{Im}

\DeclareMathOperator{\End}{End}

\begin{document}

\title[Deformation classes in generalized K\"ahler geometry]{Deformation classes in generalized K\"ahler geometry}

\begin{abstract} We introduce natural deformation classes of generalized K\"ahler structures using the Courant symmetry group.  We show that these yield natural extensions of the notions of K\"ahler class and K\"ahler cone to generalized K\"ahler geometry.  Lastly we show that the generalized K\"ahler-Ricci flow preserves this generalized K\"ahler cone, and the underlying real Poisson tensor.
\end{abstract}

\date{May 5th, 2020}

\author{Matthew Gibson}
\address{Rowland Hall\\
         University of California\\
         Irvine, CA 92617}
\email{\href{mailto:gibsonmd@uci.edu}{gibsonmd@uci.edu}}

\author{Jeffrey Streets}
\address{Rowland Hall\\
         University of California\\
         Irvine, CA 92617}
\email{\href{mailto:jstreets@uci.edu}{jstreets@uci.edu}}

\maketitle

\section{Introduction}

A rudimentary notion of K\"ahler geometry is that of the K\"ahler class: given $(M^{2n}, \gw, J)$ a K\"ahler manifold, the K\"ahler form $\gw$ is closed, and $[\gw] \in H^{1,1}_{\mathbb R}$ is the associated K\"ahler class.  Fixing the complex structure $J$, the space of all K\"ahler classes defines an open cone in $H^{1,1}_{\mathbb R}$ (the K\"ahler cone), and the fundamental result of Demailly-Paun \cite{DemaillyPaun} gives a characterization of this cone in terms of pairing against complex subvarieties.  The space of K\"ahler metrics within a given K\"ahler class is an open infinite dimensional cone in $C^{\infty}(M)$ using the $\i\del\delb$-lemma.  Thus the basic structure of the space of K\"ahler metrics compatible with a fixed complex structure $J$ is fairly well understood.  If we instead ask for the space of all K\"ahler pairs $(g, J)$ on a given smooth manifold $M$, the question becomes decidedly more delicate.  The global structure can be quite wild, with disconnected components of arbitrarily large dimension (cf. \cite{Brieskorn}).

Understanding the space of \emph{generalized K\"ahler} structures on a given manifold $M$ becomes even more delicate.  Originally discovered by Gates-Hull-Rocek \cite{GHR}, a generalized K\"ahler structure on a smooth manifold $M$ is a triple $(g, I, J)$ consisting of a Riemannian metric $g$ compatible with two integrable complex structures $I, J$ further satisfying
\begin{align*}
d^c_I \gw_I = H = - d^c_J \gw_J, \qquad d H = 0.
\end{align*}
Later, Gualtieri \cite{GualtieriThesis} gave a natural description of this geometry using the language of Hitchin's generalized complex structures \cite{HitchinGCY}, in particular in terms of a pair of generalized complex structures $(\JJ_1, \JJ_2)$ satisfying some natural conditions (cf. \S \ref{ss:bck}).  A fundamental question is to understand the degrees of freedom, moduli, and topology of the space of generalized K\"ahler structures on a given smooth manifold.

Whereas in the K\"ahler setting we can roughly speaking divide the problem of understanding the space of K\"ahler metrics into the space of possible complex structures and then to consider the space of compatible K\"ahler metrics, in the generalized K\"ahler setting such a decomposition is not really possible.  Indeed, in many settings, given two complex structures $I, J$, there is at most one compatible metric which defines a generalized K\"ahler structure.  Nonetheless, many different classes of deformations of generalized K\"ahler structure have been constructed.  Joyce gave the first examples of nontrivial (i.e. non-K\"ahler) generalized K\"ahler structures by deforming away from hyperK\"ahler structures (cf. \cite{AGG}), specifically using an action of diffeomorphisms which are Hamiltonian with respect to an associated holomorphic symplectic structure.  Later Hitchin produced nontrivial generalized K\"ahler structures on del Pezzo surfaces, with a choice of holomorphic Poisson structure playing a key role \cite{HitchindelPezzo}.  Also, Goto \cite{Goto} has extended the stability result of Kodaira-Spencer to the generalized K\"ahler setting, with the restriction that one of the generalized complex structures be defined by a pure spinor.

Our main purpose in this work is to describe a class of deformations which generalizes and unifies different notions of ``K\"ahler class" arising in the different flavors of generalized K\"ahler geometry.  It is well-known that two-forms ($B$-fields) can act on generalized complex structures by conjugation, with the integrability condition being preserved if and only if $B$ is closed.  Our deformations exploit a different, and moreover infinitessimal, action of $B$-fields.  In particular, we will say (cf. Definition \ref{d:GKvariation}) that a one-parameter family of generalized K\"ahler structures is a \emph{canonical deformation} if there exists a one parameter family $K_t \in \Lambda^2$ such that for all times $t$ where defined, one has
\begin{align*}
\dt \JJ_1 = [\JJ_1, e^K \JJ_1], \qquad \dt \JJ_2 = [\JJ_2, e^K \JJ_2].
\end{align*}

Equivalence classes of canonical deformations lead to natural definitions of generalized K\"ahler class and generalized K\"ahler cone (cf. \S \ref{s:VGK}).  These definitions make nonobvious departures from the classical idea of K\"ahler class and K\"ahler cone.  The first is the use of infinitessimal deformations  as opposed to `large' deformations.  Whereas any two metrics in the same K\"ahler class admit an explicit relationship using the $\i\del\delb$-lemma, we can no longer expect such an explicit relationship in general.  For instance, as described above Joyce's construction of nontrivial GK structure uses diffeomorphisms which are Hamiltonian with respect to the associated holomorphic symplectic structure, and in general these cannot be described by a single potential function.  Instead, as is typical of Hamiltonian diffeomorphisms, we expect to be able to explicitly describe their infinitessimal deformations.  Moreover, given that in the K\"ahler setting, deformations in the K\"ahler class involve freezing the complex structure and varying the K\"ahler form, it is natural to imagine that one should deform while fixing either $\JJ_1, \JJ_2$.  Nonetheless through careful consideration of natural variational classes of different flavors of generalized K\"ahler metrics it emerges that varying $\JJ_1$ and $\JJ_2$ simultaneously will correctly capture various existing notions of K\"ahler class in GK geometry.

A fundamental first step in unpacking this definition is to derive the algebraic and differential conditions which are imposed on $K$ to preserve the compatibility and integrability conditions for the pair $(\JJ_1, \JJ_2)$.  Through careful computations, it turns out that the answer is pleasingly simple:
\begin{thm} \label{t:maindeformationthm} Given $M$ a smooth manifold, suppose $(\JJ_1, \JJ_2)$ is a generalized K\"ahler structure.  Suppose $(\JJ_1^t, \JJ_2^t)$ is a one-parameter family of generalized almost complex structures such that
\begin{align*}
\dt \JJ_1 = [\JJ_1, e^K \JJ_1], \qquad \dt \JJ_2 = [\JJ_2, e^K \JJ_2],
\end{align*}
for some one parameter family $K_t \in \Lambda^2$.  Then $(\JJ_1^t, \JJ_2^t)$ is a one-parameter family of generalized K\"ahler structures if and only if for all $t$ one has 
\begin{enumerate}
\item $K_t \in \Lambda^{1,1}_{J^t}$,
\item $d K_t = 0$,
\item $\IP{-\JJ_1^t \JJ_2^t \cdot, \cdot } > 0$.
\end{enumerate}
where $J^t$ is determined via the Gualtieri map, and $\IP{,}$ denotes the symmetric neutral inner product on $T \oplus T^*$ (cf. \S \ref{ss:GKbck}).
\end{thm}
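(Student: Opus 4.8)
The plan is to check, along the prescribed flow, every structural condition in the definition of a generalized K\"ahler structure: that each $\JJ_i^t$ remains a generalized almost complex structure, that $\JJ_1^t$ and $\JJ_2^t$ commute, that $-\JJ_1^t\JJ_2^t$ is positive, and that each $\JJ_i^t$ is Courant--integrable. Conditions (3) is literally the positivity condition, so it requires nothing. Moreover the algebraic conditions $(\JJ_i^t)^2 = -\Id$ and orthogonality for $\IP{,}$ are preserved automatically: reading $e^{K} = 1+K$ on $T\oplus T^*$ (where $K_t\in\Lambda^2$ acts as the skew, square‑zero endomorphism $X+\xi\mapsto\iota_X K_t$), one has $\dt\JJ_i=[\JJ_i,e^K\JJ_i]=[\JJ_i,K\JJ_i]=\JJ_i K\JJ_i+K$, and this endomorphism is skew for $\IP{,}$ and anticommutes with $\JJ_i$, so $\dt((\JJ_i)^2)=0$ and $\dt\IP{\JJ_i\,\cdot\,,\JJ_i\,\cdot\,}=0$. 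Thus the genuine content lives in the commutativity and the integrability, and the claim is that these are governed exactly by (1) and (2).

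For commutativity, since we start at a generalized K\"ahler structure we have $\JJ_1\JJ_2=\JJ_2\JJ_1$ at $t=0$, and I would compute $\dt(\JJ_1\JJ_2-\JJ_2\JJ_1)$ at any time where the pair commutes. Using the generalized metric $G=-\JJ_1\JJ_2$, split $(T\oplus T^*)\otimes\mathbb{C}=C_+\oplus C_-$ into its $\pm1$‑eigenbundles; with respect to this splitting $\JJ_1=A_+\oplus A_-$ and $\JJ_2=A_+\oplus(-A_-)$, so $\JJ_1-\JJ_2$ is supported on $C_-$ and $\JJ_1\JJ_2+\Id=2\pi_{C_-}$. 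A short manipulation using $\dt\JJ_i=\JJ_i K\JJ_i+K$ then yields
\begin{align*}
\dt(\JJ_1\JJ_2-\JJ_2\JJ_1)=4\,\pi_{C_-}\,[A_-,K]\,\pi_{C_-},
\end{align*}
so the commutator stays zero precisely when $K$ commutes with the almost complex structure induced on $C_-$. Transporting this along the Gualtieri isomorphism $C_-\cong(TM,J)$ (so $A_-\rightsquigarrow J$ and $K|_{C_-}\rightsquigarrow$, up to the metric, multiplication by $g^{-1}K$) this becomes $K(J\,\cdot\,,J\,\cdot\,)=K(\,\cdot\,,\,\cdot\,)$, i.e. $K_t\in\Lambda^{1,1}_{J^t}$. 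Together with the initial condition and ODE uniqueness, (1) for all $t$ is therefore equivalent to the pair remaining commuting.

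For integrability I would grant (1) and (3), so that the pair is generalized almost K\"ahler and the classical data $(g^t,I^t,J^t)$ are defined, and decompose the velocity $[\JJ_i,e^K\JJ_i]$ — which anticommutes with $\JJ_i$ and so is purely off–diagonal for the eigenbundle splitting $L_i\oplus\bar L_i$ — into its $\Lambda^2 L_i^*$–component $\epsilon_i$ and its conjugate, then invoke the standard first–variation formula for the generalized Nijenhuis tensor: integrability of $\JJ_i$ is preserved to first order exactly when $d_{L_i}\epsilon_i=0$, with $d_{L_i}$ the Lie–algebroid differential of the Courant bracket restricted to $L_i$. The work is then (a) to compute $\epsilon_i$ explicitly in terms of $K$, using (1) to see it is carried by the relevant $(2,0)$/$(0,2)$–type pieces of $K$ transported to $L_i$, with the twisting three–form $H=-d^c_{J}\gw_{J}$ entering through the pairing identification $\bar L_i\cong L_i^*$; and (b) to show that $d_{L_1}\epsilon_1=0$ and $d_{L_2}\epsilon_2=0$ together translate back to the single condition $dK_t=0$ on $M$, the two algebroid–closedness statements recombining the $\del_I,\delb_I$ and $\del_J,\delb_J$ pieces of $dK$. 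A probably cleaner route for this paper is to bypass the spinor formalism altogether: differentiate Gualtieri's explicit block expressions for $\JJ_1,\JJ_2$ in $(g,b,I,J)$ to read off the induced evolution of $(g^t,b^t,I^t,J^t)$ under the flow — one expects a clean outcome such as $\dt\gw_{J}=K$ with $I,g,b$ determined accordingly — and then verify directly that $I^t,J^t$ remain integrable and compatible with $g^t$ and that $d^c_{I^t}\gw_{I^t}=-d^c_{J^t}\gw_{J^t}$ with this common three–form closed; each of these, given $K\in\Lambda^{1,1}_{J}$, amounts to $dK=0$ (in particular the persistence of integrability of the moving complex structure is a Kodaira--Spencer–type $\delb$–closedness that is precisely $dK=0$).

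Finally I would assemble the equivalence. Since integrability is a closed condition and, by the above, its first variation vanishes identically whenever (1) and (2) hold, a continuity/ODE–uniqueness argument — the set of $t$ in the interval of existence at which $(\JJ_1^t,\JJ_2^t)$ is generalized K\"ahler is nonempty, open, and closed — upgrades ``preserved to first order'' to generalized K\"ahler for all $t$, giving $(1),(2),(3)\Rightarrow$ GK. Conversely, if the family is generalized K\"ahler for all $t$ then (3) holds, the pair commutes so the commutator computation forces (1), and the Nijenhuis first–variation computation forces (2). I expect the main obstacle to be the integrability step: extracting $\epsilon_i$ from $[\JJ_i,e^K\JJ_i]$ and carrying the two Lie–algebroid closedness conditions (or, classically, the two torsion identities together with the Kodaira--Spencer condition) faithfully through the Gualtieri correspondence — keeping track of the twisting and of the simultaneous deformation of $I$ and $J$ — so that the bookkeeping collapses to the single symmetric statement $dK=0$ rather than to separate conditions on the $I$– and $J$–types of $K$.
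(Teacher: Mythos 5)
Your overall architecture matches the paper's: preserve the algebraic conditions, then compatibility, then integrability, with a continuity argument upgrading first-order statements to all $t$. Your treatment of compatibility is a genuinely different and cleaner route than the paper's. Where Lemma \ref{l:GKvariationlemma} and Proposition \ref{p:GKcompatibilityprop} reduce to $[\Phi_K(\JJ_1),\JJ_2]=[\Phi_K(\JJ_2),\JJ_1]$ and then verify $KJ=-J^*K$ by brute-force block computation through the Gualtieri map, your identity $\dt[\JJ_1,\JJ_2]=(\JJ_1-\JJ_2)K(1+\JJ_1\JJ_2)-(1+\JJ_1\JJ_2)K(\JJ_1-\JJ_2)$, combined with $\JJ_1-\JJ_2=2A_-\pi_{C_-}$ and $1+\JJ_1\JJ_2=2\pi_{C_-}$, isolates the condition $[A_-,\pi_{C_-}K\pi_{C_-}]=0$ on $C_-$, which transports to $K\in\Lambda^{1,1}_J$; I checked this and it is correct, and it buys a conceptual explanation (only the $C_-$ complex structure, i.e.\ $J$, sees $K$) that the paper's matrix computation obscures. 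One caveat shared with the paper: identifying $A_-$ with $J^t$ presupposes the structure is still generalized K\"ahler at time $t$, so the forward implication still needs the continuity bookkeeping run on the commutator itself.

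The gap is in the integrability step, and it is precisely the point the paper flags as the nontrivial content. Two issues. First, your ``nonempty, open and closed'' argument does not work as stated: knowing that $\dt N_{\JJ_i}=0$ at times where $N_{\JJ_i}=0$ gives closedness but not openness. What is actually needed, and what Proposition \ref{p:integrability} provides, is that the first variation of $N_{\JJ_i}$ equals the obstruction term $\i\,\pi_{0,1}(dK(\pi_T\pi_{1,0}\vec{y},\pi_T\pi_{1,0}\vec{x},\cdot))$ plus terms tensorially \emph{linear} in $N_{\JJ_i}$, so that Gronwall (Corollary \ref{c:integrability}) forces $N_{\JJ_i}\equiv 0$. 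Second, and more seriously, you name the key claim --- that the two partial obstructions, one for $\JJ_1$ and one for $\JJ_2$, jointly force $dK=0$ --- as ``the main obstacle'' but do not supply the mechanism, and your guess (``recombining the $\del_I,\delb_I$ and $\del_J,\delb_J$ pieces of $dK$'') is not the one that works. The paper's argument (Proposition \ref{p:GKint}) is: for $X,Y\in T^{1,0}_I$, the lifts to $C_+$ lie in $L_1\cap L_2$, so the single covector $\xi=\iota_Y\iota_X dK$ must satisfy $\pi_{0,1}^{\JJ_1}\xi=\pi_{0,1}^{\JJ_2}\xi=0$ simultaneously; the tangent components of these projections are $-\tfrac{\i}{2}(\gw_I^{-1}-\gw_J^{-1})\xi$ and $-\tfrac{\i}{2}(\gw_I^{-1}+\gw_J^{-1})\xi$, whose joint vanishing gives $\gw_I^{-1}\xi=0$, hence $\xi=0$, and reality of $K$ then yields $dK=0$. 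Since (as the paper emphasizes) neither condition alone implies $dK=0$, this missing step is the heart of the theorem, and without it the proposal is incomplete.
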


In particular, this theorem exhibits that the canonical deformations are, as is true in the K\"ahler setting, determined infinitessimally by a closed form which is $(1,1)$ with respect to $J$.  We emphasize here that the condition that $dK = 0$ does \emph{not} follow from the known fact that the conjugation action of $B$-fields on generalized complex structures preserves integrability if and only if $dB = 0$.  For instance, if we consider our infinitessimal action on a single generalized complex structure, the condition to preserve integrability is \emph{strictly weaker} than $dK = 0$ (cf. Proposition \ref{p:integrability}).  It is only in the context of preserving the integrability conditions of generalized \emph{K\"ahler} geometry that one derives $dK = 0$.

Despite the simplicity of the conditions of Theorem \ref{t:maindeformationthm} and the apparent simplicity of canonical deformations from the point of view of generalized geometry, the deformations induced on the classical bihermitian data $(g,I, J)$ are delicate.  Remarkably, these canonical deformations unify all previously known instances of ``K\"ahler class" in generalized geometry, specifically the classical notion of K\"ahler class, the modified K\"ahler classes implicit in Apostolov-Gualtieri (\cite{ApostolovGualtieri} Proposition 5, cf. also \cite{GHR}) in the commuting GK case, as well as Joyce's Hamiltonian deformation construction in the nondegenerate case.  We state this for emphasis (cf. \S \ref{ss:examples} for notation):

\begin{prop} \label{p:examplesprop} The following hold:
\begin{enumerate}
\item Given $(M^{2n}, g, J)$ a K\"ahler manifold, and $u \in C^{\infty}(M)$ such that $\gw + \i \del \delb u > 0$, the one-parameter family
\begin{align*}
(\gw_I)_t = \gw + t d I d u, \qquad I_t = J, \qquad J_t = J
\end{align*}
arises as a canonical deformation of generalized K\"ahler structures for $0 \leq t \leq 1$ defined by
\begin{align*}
K_t = d J d u.
\end{align*}
\item Given $(M^{2n}, g, I, J)$ a generalized K\"ahler manifold such that $[I, J] = 0$, and $u \in C^{\infty}(M)$ such that $\gw_I + \i \left( \del_+ \delb_+ - \del_- \delb_-  \right)u > 0$, the one-parameter family
\begin{align*}
(\gw_I)_t = \gw_I + t \i \left( \del_+ \delb_+ - \del_- \delb_-  \right) u, \qquad I_t = I, \qquad J_t = J
\end{align*}
arises as a canonical deformation of generalized K\"ahler structures for $0 \leq t \leq 1$ defined by
\begin{align*}
K_t = d J d u.
\end{align*}
\item Let $(M^{2n}, g, I, J)$ be a generalized K\"ahler manifold such that the Poisson structure $\gs = \tfrac{1}{2}[I, J] g^{-1}$ is nondegenerate, with $\Omega = \gs^{-1}$.  Given $u_t \in C^{\infty}(M)$ a family of smooth functions, let $\phi_t$ denote the one-parameter family of $\Omega$-Hamiltonian diffeomorphisms generated by $u_t$.  Then, for all $t$ such that $- \Img \pi_{\Lambda_{I}^{1,1}} \Omega_J > 0$, the one-parameter family of generalized K\"ahler structures determined by
\begin{align*}
\Omega_t = \Omega, \qquad I_t = I, \qquad J_t = \phi_t^* J
\end{align*}
defines a canonical deformation of generalized K\"ahler structures determined by
\begin{align*}
K_t = d J_t d u_t.
\end{align*}
\end{enumerate}
\end{prop}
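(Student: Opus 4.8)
The plan is to realize each of the three families as an instance of Theorem~\ref{t:maindeformationthm}. In each case one is handed an explicit one-parameter family of classical (Poisson-)bihermitian data, and the first step is to translate it into a one-parameter family of generalized almost complex structures $(\JJ_1^t,\JJ_2^t)$ via the Gualtieri map recalled in \S\ref{ss:GKbck}: in items (1)--(2) this uses that the prescribed perturbation of $\gw_I$ is compatible with $I$ (and then with $J$), and in item (3) it uses the nondegenerate dictionary relating $(\Omega,I,J)$ to $(\gw_I,\gw_J)$ and to the holomorphic-symplectic-type form $\Omega_{J_t}$. Granting this translation, it suffices to check two things: first, that the family solves the canonical deformation system $\dt\JJ_a^t=[\JJ_a^t,e^{K_t}\JJ_a^t]$ for the stated $K_t=dJ_t\,du_t$; and second, that this $K_t$ satisfies conditions (1)--(3) of Theorem~\ref{t:maindeformationthm}. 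But (2) is automatic since $K_t=dJ_t\,du_t$ is exact, hence closed; (1) is automatic since $dJ_t\,du_t$ is (a multiple of) the real $(1,1)$-form $\i\del_{J_t}\delb_{J_t}u_t$; and (3) is precisely the positivity assumption stated in each item. Hence Theorem~\ref{t:maindeformationthm} upgrades the solution of the ODE to a genuine generalized K\"ahler family, i.e.\ a canonical deformation in the sense of Definition~\ref{d:GKvariation}, and the whole proof reduces to the ODE verification.

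To make that verification tractable I would first simplify the right-hand side. A two-form $K\in\Lambda^2$ acts on $T\oplus T^*$ as the nilpotent endomorphism $\iota(K)\colon (X+\xi)\mapsto\iota_X K$, so $e^{K}=\Id+\iota(K)$, and using $\JJ_a^2=-\Id$ one finds
\begin{equation*}
[\JJ_a,e^{K}\JJ_a]=\iota(K)+\JJ_a\,\iota(K)\,\JJ_a.
\end{equation*}
One checks directly that the right-hand side anticommutes with $\JJ_a$ and is skew for $\IP{\,,\,}$, so it is always a legitimate infinitesimal deformation of a generalized almost complex structure and the system is consistent; the content in each item is therefore the pointwise linear-algebra identity $\dt\JJ_a^t=\iota(K_t)+\JJ_a^t\,\iota(K_t)\,\JJ_a^t$, to be verified block by block against the Gualtieri formulas after substituting $\dt(\gw_I)_t$, $\dt I_t$ and $\dt J_t$.

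Items (1) and (2) I would treat together, since (1) is the degenerate case $I=J$ of (2): there the $(-1)$-eigenbundle of $IJ^{-1}$ is trivial, $\del_\pm$ collapse to $\del$, and $\i(\del_+\delb_+-\del_-\delb_-)u$ reduces to $\i\del\delb u$ while $dJ\,du$ is unchanged. For the commuting case one works with the splitting $TM=T_+\oplus T_-$ into the $\pm1$-eigenbundles of $IJ^{-1}$, on which $I=\pm J$, and the induced decomposition $d=\del_++\delb_++\del_-+\delb_-$; since $\dt I_t=\dt J_t=0$ and $\dt(\gw_I)_t=\i(\del_+\delb_+-\del_-\delb_-)u$, the identity $\dt\JJ_a^t=\iota(K_t)+\JJ_a^t\iota(K_t)\JJ_a^t$ with $K_t=dJ\,du$ becomes a finite computation on each of the $T_\pm$ blocks, the only delicate point being the sign change on the $T_-$-block that distinguishes the $J$-operators from the $I$-operators, which accounts for the discrepancy between the form $\i(\del_+\delb_+-\del_-\delb_-)u$ driving $\dt\gw_I$ and the form $K_t=dJ\,du$. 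For item (3) the complex structure genuinely moves, $\dt J_t=\LL_{X_t}J_t$ with $X_t$ the $\Omega$-Hamiltonian vector field satisfying $\iota_{X_t}\Omega=du_t$; here one substitutes Cartan's formula, uses $\LL_{X_t}\Omega=0$, and pushes everything through the nondegenerate dictionary, so that $\dt\JJ_2^t$ is governed by $\dt\Omega_{J_t}$; one then matches both $\dt\JJ_1^t$ and $\dt\JJ_2^t$ against $\iota(K_t)+\JJ_a^t\iota(K_t)\JJ_a^t$ with $K_t=dJ_t\,du_t$.

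I expect the main obstacle to be precisely this blockwise matching in items (2) and (3). It requires careful bookkeeping of the sign conventions in the Gualtieri map, of the $\del_\pm,\delb_\pm$ calculus (respectively of the Hamiltonian-flow terms produced by Cartan's formula), and of the background closed three-form $H$ implicit in $(\JJ_1,\JJ_2)$. In item (3) the truly subtle step is to identify $dJ_t\,du_t$ with the correct component of $\dt\Omega_{J_t}$ --- effectively the statement that Joyce's Hamiltonian deformation moves $\Omega_{J_t}$ only in its $\pi_{\Lambda_I^{1,1}}$-part, and does so by an exact form --- which is also where the hypothesis $-\Img\pi_{\Lambda_I^{1,1}}\Omega_{J_t}>0$ re-enters as the concrete incarnation of condition (3).
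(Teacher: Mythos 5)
Your proposal is correct and follows essentially the same route as the paper: translate the classical data through the Gualtieri map, verify the ODE $\dt \JJ_a = \Phi_{K}(\JJ_a)$ blockwise against the explicit matrix formulas, and invoke Theorem \ref{t:maindeformationthm} for the $(1,1)$, closedness, and positivity conditions. The only organizational difference is that the paper performs the blockwise matching once and for all in Proposition \ref{p:GKvariation} (yielding $\dot I = 0$, $\dot J = \tfrac{1}{2}[I,J]g^{-1}K$, $\dot \gw_I = -\tfrac{1}{2}[K,I]I$, etc.) and then, together with Corollary \ref{c:Poissoncor} to force $[I_t,J_t]\equiv 0$ in items (1)--(2), simply reads each of the three examples off from those general formulas rather than redoing the computation case by case.
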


As a final point to contextualize these deformations, we recall that various interesting deformation classes of generalized K\"ahler structure have been produced using holomorphic Poisson structures.  Given a generalized K\"ahler structure $(g, I, J)$, there is a Poisson tensor
\begin{align*}
\gs = \tfrac{1}{2} [I, J] g^{-1}
\end{align*}
which is the real part of a holomorphic Poisson tensor with respect to both $I$ and $J$.  By choosing an appropriate deformation of $\gs$, Hitchin \cite{HitchindelPezzo} produced deformations of K\"ahler metrics on del Pezzo surfaces to strictly generalized K\"ahler structures.  Also the deformation theory of Goto \cite{Goto} changes this underlying Poisson tensor.  As it turns out our deformations fix $\gs$ and $I$, so occur against a fixed background of a holomorphic Poisson structure.

\begin{cor} \label{c:Poissoncor} Given $M$ a smooth manifold, suppose $(\JJ_1^t, \JJ_2^t)$ is a canonical deformation of generalized K\"ahler structures.  Then for all $t$,
\begin{align*}
I^t \equiv I^0, \qquad \gs^t \equiv \gs^0
\end{align*}
\end{cor}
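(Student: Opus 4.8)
The plan is to extract the claimed invariance directly from the structure of the canonical deformation equations together with the constraints furnished by Theorem \ref{t:maindeformationthm}. Recall that under the Gualtieri map, a generalized K\"ahler structure $(\JJ_1,\JJ_2)$ determines the bihermitian data; in particular $I$ is recovered from the $+1$-eigenbundle structure, and one expects $I$ and $J$ to appear (up to the $B$-field normalization built into $\JJ_1,\JJ_2$) as $\JJ_1 = \tfrac12(\JJ_+ + \JJ_-)$-type combinations with $I = \pi_+ \circ(\text{projection})$, $J = \pi_-$-analogue. The first step, then, is to write down explicitly how $I^t$ is read off from $(\JJ_1^t,\JJ_2^t)$ and compute $\dt I^t$ using $\dt \JJ_a = [\JJ_a, e^K \JJ_a]$. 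The key algebraic input is that $K_t \in \Lambda^{1,1}_{J^t}$ (Theorem \ref{t:maindeformationthm}(1)): when one decomposes $e^K$ acting by the bracket $[\JJ_1,\cdot]$ and projects onto the component governing $I$, the $(1,1)_{J}$ condition forces the relevant piece to vanish. Concretely, I expect that the infinitesimal variation of $I$ is a specific contraction of $K$ against the generalized complex data, and that this contraction is identically zero precisely because $K$ has no $(2,0)+(0,2)$ part with respect to $J$ (this is the same mechanism by which $K = dJdu$ produces $\dt(\gw_I)$ in Proposition \ref{p:examplesprop} without moving $I$ or $J$ in cases (1), (2)).

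Second, I would handle the Poisson tensor $\gs^t = \tfrac12 [I^t, J^t](g^t)^{-1}$. Rather than differentiate this product term by term — which would require also controlling $\dt J^t$ and $\dt g^t$, both of which are genuinely nonzero in general — I would instead identify $\gs$ intrinsically from the generalized complex package. The standard fact (Gualtieri, Hitchin) is that $\gs$ is the real part of the holomorphic Poisson tensor associated to $\JJ_1$; more to the point, $\JJ_1 \JJ_2$ has a block/off-diagonal structure whose relevant component is exactly $\gs$ (the condition $\IP{-\JJ_1\JJ_2\cdot,\cdot}>0$ in Theorem \ref{t:maindeformationthm}(3) is positivity of the associated metric, and the antisymmetric part encodes $\gs$). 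So I would write $\gs$ as a universal algebraic expression $P(\JJ_1,\JJ_2)$, compute $\dt P$ via the product rule using both deformation equations, and show the result vanishes. Here the inputs will be both $K \in \Lambda^{1,1}_J$ \emph{and} $dK = 0$ is not needed — only the algebraic constraint (1) — since the Poisson tensor is a pointwise object; the cancellation should come from the fact that $e^K$ commutes in the appropriate sense with the piece of $\JJ_1 \JJ_2$ representing $\gs$, equivalently that $[e^K\JJ_1,\JJ_2] + [\JJ_1, e^K\JJ_2]$ has no component along $\gs$.

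An alternative and perhaps cleaner route for the $\gs$ statement: once $I^t \equiv I^0$ is established, note that $\gs^t$ is the real part of a \emph{holomorphic} Poisson structure with respect to the fixed $I$. A holomorphic Poisson bivector on a fixed complex manifold $(M,I)$ that varies smoothly in $t$ must, if it remains the Poisson tensor underlying a canonical deformation, satisfy a first-order ODE; if I can show $\dt \gs^t$ lies in a subspace forced to be $0$ (e.g.\ it is both of type $(2,0)+(0,2)$ and of type $(1,1)$ with respect to $I$, hence zero), invariance follows. Either way, the bulk of the work reduces to one linear-algebra computation on $T\oplus T^*$.

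The main obstacle I anticipate is purely bookkeeping: correctly tracking how the bracket $[\JJ_a, e^K\JJ_a]$ decomposes under the projections onto the $\pm i$-eigenbundles and onto the $TM$- versus $T^*M$-blocks, and verifying that the $\Lambda^{1,1}_J$-condition on $K$ kills exactly the components that would otherwise move $I$ and $\gs$. In other words, the conceptual content is light — $I$ and $\gs$ are the ``holomorphic Poisson'' part of the data, and a $(1,1)$-form deformation cannot disturb that — but making the matrix identities on $T\oplus T^*$ line up with the $J$-type decomposition of $K$ is where care is required. I would organize the proof by first recording the explicit formulas (from \S\ref{ss:GKbck}) for $I$ and $\gs$ in terms of $\JJ_1,\JJ_2$, then differentiating, then invoking Theorem \ref{t:maindeformationthm}(1) to conclude each derivative vanishes.
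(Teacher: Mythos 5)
Your first step is sound and is in fact how the paper proceeds: Proposition \ref{p:GKvariation} differentiates the Gualtieri map, and the condition $K\in\Lambda^{1,1}_{J}$ is exactly what makes the resulting expression $\dot I = \omega_I^{-1}\dot\omega_I\omega_I^{-1}g - \omega_I^{-1}\dot g$ collapse to zero. The gap is in your treatment of $\gs$. The algebraic identification you propose to build it on is incorrect: $-\JJ_1\JJ_2=\GG$ is the generalized metric, which is \emph{symmetric} with respect to the neutral pairing and whose blocks encode only $g$ and $b$ (the off-diagonal block is $g^{-1}$); there is no ``antisymmetric part'' of it encoding $\gs$. The Poisson tensors that do sit inside the individual $\JJ_i$ are the anchors $\pi_T\JJ_i|_{T^*}=-\tfrac12(\omega_I^{-1}\mp\omega_J^{-1})$, and these are \emph{not} preserved by a canonical deformation (indeed $\dot\omega_I=-\tfrac12[K,I]I\neq 0$ in general) and do not equal $\gs=\tfrac12[I,J]g^{-1}$. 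Your fallback (``$\sdt\gs$ is simultaneously of type $(1,1)$ and $(2,0)+(0,2)$, hence zero'') is asserted but never substantiated, so as written the second half of the corollary is not proved.

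Moreover, the route you explicitly reject --- differentiating $\gs=\tfrac12[I,J]g^{-1}$ term by term --- is precisely the paper's proof, and the objection that it ``requires controlling $\sdt J$ and $\sdt g$'' is not an obstacle: Proposition \ref{p:GKvariation} supplies $\dot I=0$, $\dot J=\tfrac12[I,J]g^{-1}K$ and $\dot g=-\tfrac12[K,I]$ explicitly. Substituting these into
\begin{align*}
\dot\gs \ =\ \tfrac12[I,\dot J]\,g^{-1}-\tfrac12[I,J]\,g^{-1}\dot g\,g^{-1}
\end{align*}
and using $g^{-1}I^{*}=-Ig^{-1}$, every term collapses to a multiple of $\left(I[I,J]+[I,J]I\right)g^{-1}Kg^{-1}$, which vanishes identically since $I$ anticommutes with $[I,J]$. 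You are right that only the pointwise condition $K\in\Lambda^{1,1}_J$ (through the formulas for $\dot J$ and $\dot g$) and not $dK=0$ enters here; but to complete your argument you would either need to carry out this direct computation after all, or produce a correct closed-form expression for $\gs$ in terms of $(\JJ_1,\JJ_2)$ --- which, once unwound, amounts to the same bihermitian calculation.
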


As an application, we are able to express the generalized K\"ahler-Ricci flow in a simple way using canonical deformations.  The equation is an extension of K\"ahler-Ricci flow to the setting of generalized K\"ahler geometry, introduced by the second author and Tian \cite{GKRF}.  Recently this flow has been used to study the global topology of the (nonlinear) space of generalized K\"ahler structures in certain settings \cite{ASNDGKCY}.  To describe this flow, fix $(g, I, J)$ a generalized K\"ahler structure.  Associated to the Hermitian structure $(g, I)$ is the Bismut connection
\begin{align*}
\N^I = D + \tfrac{1}{2} H g^{-1},
\end{align*}
where $H = d^c_I \gw_I$, and $D$ denotes the Levi-Civita connection.  This is a Hermitian connection, and if $\Omega_I$ denotes its curvature, we obtain a representative of the first Chern class via contraction, called the Bismut-Ricci tensor:
\begin{align*}
\rho_I =&\ \tfrac{1}{2} \tr \Omega_I I.
\end{align*}
From the Bianchi identity we know that $d \rho_I = 0$, but it is not in general true that $\rho_I \in \Lambda^{1,1}_{I}$, and we will let $\rho_I^{1,1}$ denote its $(1,1)$ projection.  Furthermore, associated to $(g, I)$ we obtain the \emph{$I$-Lee form}, defined by
\begin{align*}
\theta_I(X) = d^* \gw_I(I X).
\end{align*}
Similarly we obtain the Lee form $\theta_J$ associated to $(g, J)$.  With this background in place, we can describe the generalized K\"ahler-Ricci flow in the $I$-fixed gauge simply by
\begin{gather} \label{f:GKRFBH}
\begin{split}
\dt \gw_I =&\ - \rho_I^{1,1}, \qquad \dt J = L_{\tfrac{1}{2} \left(\theta_J^{\sharp} - \theta_I^{\sharp}\right)} J.
\end{split}
\end{gather}
The evolution of the complex structure $J$ is derived in \cite{GKRF}, arising from delicate gauge manipulations and curvature identities.  On the other hand it has been shown in several special cases (cf. \S \ref{s:GKRF} below) that the generalized K\"ahler-Ricci is driven entirely by $\rho_I$.  Using our description of canonical deformations, and a further subtle curvature identity for generalized K\"ahler manifolds (Proposition \ref{p:sigchern2}), we confirm that this is true in full generality, and give a very simple description of generalized K\"ahler-Ricci flow in terms of the associated generalized complex structures.

\begin{thm} \label{t:GKflowthm} Let $(M^{2n}, g_t, I, J_t)$ be a solution of generalized K\"ahler-Ricci flow in the $I$-fixed gauge.  The one parameter family of pairs of associated generalized complex structures $(\JJ_1^t, \JJ_2^t)$ evolve by
\begin{gather} \label{f:GKRFGG}
\begin{split}
\dt \JJ_1 =&\ [\JJ_1, e^{\rho_I} \JJ_1], \qquad \dt \JJ_2 = [\JJ_2, e^{\rho_I} \JJ_2].
\end{split}
\end{gather}
In other words, the generalized K\"ahler-Ricci flow is the canonical deformation driven by the $I$-Bismut-Ricci tensor.
\end{thm}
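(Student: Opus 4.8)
The plan is to start from the known evolution equations for generalized K\"ahler–Ricci flow in the $I$-fixed gauge, namely \eqref{f:GKRFBH}, and to translate each of the two classical evolutions into the language of generalized complex structures, matching them against the right-hand sides of \eqref{f:GKRFGG}. By Theorem \ref{t:maindeformationthm}, the pair of evolutions \eqref{f:GKRFGG} is a \emph{bona fide} canonical deformation of generalized K\"ahler structures precisely when $\rho_I \in \Lambda^{1,1}_{J^t}$, $d \rho_I = 0$, and the positivity condition $\IP{-\JJ_1^t \JJ_2^t \cdot, \cdot} > 0$ holds; the closedness $d\rho_I = 0$ is the Bianchi identity recalled in the excerpt, the positivity is the statement that $(g_t, I, J_t)$ remains generalized K\"ahler along the flow (which is exactly the content of having a solution of generalized K\"ahler-Ricci flow), and the $(1,1)$-with-respect-to-$J$ condition is where Proposition \ref{p:sigchern2} enters: one must show that although $\rho_I$ need not be type $(1,1)$ with respect to $I$, it \emph{is} of type $(1,1)$ with respect to $J^t$. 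So the real content is (a) verifying that the canonical deformation generated by $K_t = \rho_I$ has the stated effect on the classical data $(g,I,J)$, and (b) establishing the type condition $\rho_I \in \Lambda^{1,1}_J$.

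For step (a), I would compute the induced variation of $\JJ_1$ and $\JJ_2$ under the infinitessimal $B$-field action $\dt \JJ_a = [\JJ_a, e^{K}\JJ_a]$ and push it through the Gualtieri correspondence back to $(g, I, J)$. Since $\JJ_1$ is built from $\gw_I$ (in the appropriate block form on $T \oplus T^*$) and $\JJ_2$ from $\gw_J$, bracketing against $e^{\rho_I}\JJ_a$ should reproduce, on the diagonal blocks, a variation of the endomorphism part and, on the off-diagonal blocks, a variation of the two-form part. I expect the $\JJ_1$-equation with $K = \rho_I$ to yield $\dt \gw_I = -\rho_I^{1,1}$ together with $\dt I = 0$ (the $(1,1)$-projection appearing because only the $J$-invariant part — which by step (b) equals the $I$-sense decomposition after the relevant manipulations — survives the compatibility constraints), and the $\JJ_2$-equation to yield $\dt J = L_{\frac12(\theta_J^\sharp - \theta_I^\sharp)} J$; matching the Lee-form drift is where Proposition \ref{p:sigchern2} is used a second time, relating $\pi^{0,2}_I \rho_I$ (equivalently the failure of $\rho_I$ to be type $(1,1)$ for $I$) to the difference of Lee forms and hence to the vector field generating the $J$-evolution in \eqref{f:GKRFBH}. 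This is essentially a bookkeeping computation reversing the derivation in \cite{GKRF}, cross-checked against Corollary \ref{c:Poissoncor} (which forces $\dt I = 0$ and $\dt\gs = 0$, consistent with the $I$-fixed gauge).

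The main obstacle I anticipate is step (b): proving $\rho_I \in \Lambda^{1,1}_{J^t}$. A priori $\rho_I$ is a closed real two-form representing $2\pi c_1$ which is type $(1,1)$ for neither $I$ nor $J$ in general, so one genuinely needs the special curvature identity of Proposition \ref{p:sigchern2} — presumably an identity expressing $\rho_I$ in terms of the Chern–Ricci form of $(g,I)$ plus correction terms built from the Lee form and the torsion $H$, whose non-$(1,1)_I$ part is controlled by the Poisson tensor $\gs$. The delicate point is that $J$-type rather than $I$-type is the relevant notion, so I would exploit the relation between $I$ and $J$ on the $\pm$-eigenbundles of $\gs$ (where $I = \pm J$) and on its kernel, to show the $(0,2)_J + (2,0)_J$ components of $\rho_I$ vanish. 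Once (b) is in hand, Theorem \ref{t:maindeformationthm} immediately certifies that \eqref{f:GKRFGG} defines a canonical deformation, and step (a) identifies it with generalized K\"ahler–Ricci flow, completing the proof.
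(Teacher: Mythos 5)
Your proposal is correct and follows essentially the same route as the paper: push the canonical deformation with $K=\rho_I$ through the Gualtieri map using the induced-variation formulas of Proposition \ref{p:GKvariation} (giving $\dot I = 0$, $\dot{\gw}_I = -\rho_I^{1,1}$, $\dot J = \tfrac{1}{2}[I,J]g^{-1}\rho_I$) and then invoke Proposition \ref{p:sigchern2} to identify $\tfrac{1}{2}[I,J]g^{-1}\rho_I$ with $L_{\tfrac{1}{2}(\theta_J^{\sharp}-\theta_I^{\sharp})}J$, i.e.\ with the $J$-evolution in (\ref{f:GKRFBH}). The one point to adjust is your reading of Proposition \ref{p:sigchern2}: it does not establish the type condition $\rho_I\in\Lambda^{1,1}_{J}$ but rather assumes it (as a known identity for generalized K\"ahler structures) and proves the Lee-form identity $g\left(\left(L_{\theta_J^{\sharp}-\theta_I^{\sharp}}J\right)X,Y\right)=\rho_I([I,J]X,Y)$, which is exactly the ingredient needed to match the drift of $J$.
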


Immediately following from Theorem \ref{t:GKflowthm} and Corollary \ref{c:Poissoncor} is that fact that generalized K\"ahler-Ricci flow preserves the underlying real Poisson tensor $\gs$, and moreover preserves the generalized K\"ahler cone associated to the initial data.

\begin{cor} Let $(M^{2n}, g_t, I, (J)_t)$ be a solution of generalized K\"ahler-Ricci flow in the $I$-fixed gauge. 
The associated one-parameter families of generalized complex structures $(\JJ_1, \JJ_2)$ lies in the generalized K\"ahler cone associated to the initial data.  In particular, for all $t$ such that the flow is defined, 
\begin{align*}
\gs^t \equiv \gs^0,
\end{align*}
in other words, the real Poisson tensor $\gs$ is fixed along the flow.  
\end{cor}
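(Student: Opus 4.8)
The plan is to deduce this statement directly from Theorem \ref{t:GKflowthm} together with Corollary \ref{c:Poissoncor}, so that essentially all of the analytic content has already been established. First, let $(M^{2n}, g_t, I, J_t)$ be a solution of generalized K\"ahler-Ricci flow in the $I$-fixed gauge on its maximal interval of existence, and let $(\JJ_1^t, \JJ_2^t)$ be the associated one-parameter family of pairs of generalized complex structures supplied by the Gualtieri correspondence. By Theorem \ref{t:GKflowthm} this family satisfies
\begin{align*}
\dt \JJ_1 = [\JJ_1, e^{\rho_I} \JJ_1], \qquad \dt \JJ_2 = [\JJ_2, e^{\rho_I} \JJ_2],
\end{align*}
so it is a canonical deformation with $K_t = \rho_I$ in the sense of Definition \ref{d:GKvariation}, \emph{provided} $\rho_I$ meets the structural requirements appearing in Theorem \ref{t:maindeformationthm}.

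Second, I would verify those requirements. Closedness $d \rho_I = 0$ is the Bianchi identity for the Bismut connection of $(g_t, I)$ recalled in the introduction. The positivity condition $\IP{- \JJ_1^t \JJ_2^t \cdot, \cdot} > 0$ holds because, by construction, $(g_t, I, J_t)$ remains a genuine generalized K\"ahler structure for as long as the flow exists. The one nontrivial point is the type condition $\rho_I \in \Lambda^{1,1}_{J_t}$; but this is precisely the curvature identity for generalized K\"ahler manifolds (Proposition \ref{p:sigchern2}) invoked in the proof of Theorem \ref{t:GKflowthm}, and is therefore already in hand. Granting this, the hypotheses of Theorem \ref{t:maindeformationthm} are met and $(\JJ_1^t, \JJ_2^t)$ is a genuine canonical deformation of the initial data $(\JJ_1^0, \JJ_2^0)$.

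Third, I would conclude by matching definitions. By the definition of the generalized K\"ahler cone associated to $(\JJ_1^0, \JJ_2^0)$ (cf. \S \ref{s:VGK}) as the set of equivalence classes of generalized K\"ahler structures joined to it by canonical deformations, the previous step shows $(\JJ_1^t, \JJ_2^t)$ lies in this cone for every $t$ in the interval of existence. Finally, applying Corollary \ref{c:Poissoncor} to this canonical deformation yields $I^t \equiv I^0$ — which is of course automatic in the $I$-fixed gauge — and $\gs^t \equiv \gs^0$, the asserted invariance of the real Poisson tensor. I do not expect a serious obstacle: once Theorem \ref{t:GKflowthm} is available, the remaining argument is bookkeeping, with the only delicate input, the $(1,1)$-ness of $\rho_I$ with respect to $J_t$, already built into that theorem's proof via Proposition \ref{p:sigchern2}.
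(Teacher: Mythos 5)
Your proposal is correct and follows essentially the same route as the paper, which treats this corollary as an immediate consequence of Theorem \ref{t:GKflowthm} and Corollary \ref{c:Poissoncor}; indeed your middle verification step is redundant, since the flow is assumed to remain generalized K\"ahler, so Definition \ref{d:GKvariation} applies directly once Theorem \ref{t:GKflowthm} identifies the evolution as $\Phi_{\rho_I}$. One small attribution slip: the type condition $\rho_I \in \Lambda^{1,1}_{J_t}$ is a known curvature identity \emph{used} in the proof of Proposition \ref{p:sigchern2}, not its conclusion, but this does not affect the validity of your argument.
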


\section{Formal deformations of generalized complex structure} \label{s:GCsec}
\subsection{Background} \label{ss:bck}

Given $M$ a smooth manifold, the generalized tangent bundle is given by $T \oplus T^*$.  This bundle comes equipped with a family of natural brackets determined by a closed three-form $H$.  In particular, given $H \in \Lambda^3 T^*$, $d H = 0$, define the twisted Courant bracket $[,]$ for sections of $T \oplus T^*$ via
\begin{align} \label{f:Courant}
[X + \xi, Y + \eta] =&\ [X,Y] + L_X \eta - L_Y \xi + \tfrac{1}{2} d \left( \xi(Y) - \eta(X) \right) + i_Y i_X H.
\end{align}
A generalized complex structure $\JJ$ is then an almost complex structure on $T \oplus T^*$, whose $\i$-eigenbundle, denoted by $L$, is integrable with respect to the the twisted Courant bracket.  This  condition is naturally captured by a corresponding version of the Nijenhuis tensor, where for a given almost complex structure we associate the natural projection maps $\pi_{0,1}, \pi_{1,0}$ and then for $\vec{x}, \vec{y} \in T \oplus T^*$ we have
\begin{align} \label{f:Nij}
N_{\JJ}(\vec{x}, \vec{y}) = \pi_{0,1} [\pi_{1,0}(\vec{x}), \pi_{1,0}(\vec{y})].
\end{align}
Direct computations show that this is tensorial, and vanishes if and only if the associated almost generalized complex structure is integrable.  See \cite{GualtieriThesis} for further discussion.

\subsection{Variations of generalized complex structure} \label{ss:GCvar}

To begin we define an action of $B$-fields on generalized complex structures.

\begin{defn} \label{d:vardef}  Given a smooth manifold $M$ and $K \in \Lambda^2$, define
\begin{align*}
\Phi_K & : \End(T \oplus T^*)\to \End(T \oplus T^*),\\
\Phi_K& (\JJ) = [\JJ, e^K \JJ],
\end{align*}
where
\begin{align*}
e^K = \left(
\begin{matrix}
1 & 0\\
K & 1
\end{matrix} \right) \in \End(T \oplus T^*).
\end{align*}
\end{defn}

For a given $\JJ$, we intend to use $\Phi_K(\JJ)$ as a tangent vector to a one-parameter variation of $\JJ$ through generalized complex structures.  We first note that variations of this kind will indeed preserve the space of generalized almost complex structures.

\begin{lemma} \label{l:variation}  Let $\JJ_t$ be a one-parameter family of endomorphisms of $T\oplus T^*$ such that $\JJ_0$ is an almost generalized complex structure and
\begin{align*}
\dt \JJ_t =&\ [\JJ_t, e^{K_t} \JJ_t].
\end{align*}
Then $\JJ_t^2 = -1$ for each $t$, i.e. $\JJ_t$ is a family of generalized almost complex structures.
\end{lemma}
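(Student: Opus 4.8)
The plan is to show that the condition $\JJ_t^2 = -1$ is preserved by establishing that the derivative of $\JJ_t^2$ vanishes whenever $\JJ_t^2 = -1$, and then invoke uniqueness of ODE solutions. Concretely, set $A_t = \JJ_t^2 + \Id$; we want to show $A_t \equiv 0$ given $A_0 = 0$. First I would compute
\begin{align*}
\dt \left( \JJ_t^2 \right) = \left( \dt \JJ_t \right) \JJ_t + \JJ_t \left( \dt \JJ_t \right) = [\JJ_t, e^{K_t} \JJ_t] \JJ_t + \JJ_t [\JJ_t, e^{K_t} \JJ_t].
\end{align*}
Expanding the commutators, $[\JJ, e^K \JJ] = \JJ e^K \JJ - e^K \JJ^2$, so the right-hand side becomes
\begin{align*}
\left( \JJ e^K \JJ - e^K \JJ^2 \right) \JJ + \JJ \left( \JJ e^K \JJ - e^K \JJ^2 \right) = \JJ e^K \JJ^2 - e^K \JJ^3 + \JJ^2 e^K \JJ - \JJ e^K \JJ^2.
\end{align*}
The first and last terms cancel, leaving $\JJ^2 e^K \JJ - e^K \JJ^3 = \JJ^2 e^K \JJ - e^K \JJ^2 \JJ = (\JJ^2 e^K - e^K \JJ^2)\JJ = [\JJ^2, e^K]\JJ$. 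Writing $\JJ^2 = A_t - \Id$, we get $[\JJ^2, e^K] = [A_t, e^K]$ (the identity drops out), so
\begin{align*}
\dt \left( \JJ_t^2 \right) = [A_t, e^{K_t}] \JJ_t,
\end{align*}
hence $\dt A_t = [A_t, e^{K_t}] \JJ_t$. This is a linear homogeneous ODE in $A_t$ (with $t$-dependent coefficients built from $e^{K_t}$ and $\JJ_t$, which are continuous in $t$). Since $A_0 = 0$, uniqueness of solutions to linear ODEs forces $A_t \equiv 0$, i.e. $\JJ_t^2 = -\Id$ for all $t$.

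I do not anticipate a serious obstacle here; the computation is short and the conclusion follows from standard ODE uniqueness. The one point requiring minor care is algebraic bookkeeping in expanding the commutators — making sure the cancellations are tracked correctly so that the surviving terms assemble into $[A_t, e^{K_t}]\JJ_t$ with no leftover inhomogeneous term. It is precisely the fact that the inhomogeneous piece (what would come from the $-\Id$ in $\JJ^2$) cancels that makes the argument work; if instead one obtained $\dt A_t = [A_t, e^{K_t}]\JJ_t + (\text{something nonzero when } A_t = 0)$, the conclusion would fail. As a sanity check one can verify the identity $[\JJ^2, e^K]\JJ = [A_t, e^K]\JJ$ directly, using that $\Id$ commutes with everything. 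Optionally, one could also remark that the same computation shows $\JJ_t^2 = \JJ_0^2$ is preserved for \emph{any} initial endomorphism $\JJ_0$ with $\JJ_0^2$ central, not just $\JJ_0^2 = -\Id$, though only the stated case is needed.
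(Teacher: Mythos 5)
Your proposal is correct, and the core idea is the same as the paper's: differentiate $\JJ_t^2$, observe the cancellation, and conclude from the initial condition $\JJ_0^2=-\Id$. The difference is one of rigor rather than strategy. The paper's displayed computation writes $\dt\JJ = \JJ e^K\JJ + e^K$, i.e.\ it substitutes $\JJ_t^2=-1$ into the commutator \emph{before} differentiating, and then uses $\JJ_t^2=-1$ again to get $0$; strictly speaking this only shows that the derivative of $\JJ_t^2$ vanishes \emph{on} the constraint set, and the final sentence ``since $\JJ_0^2=-1$ the lemma follows'' silently appeals to exactly the ODE-uniqueness argument you make explicit. Your version, which keeps the full expression $[\JJ,e^K\JJ]=\JJ e^K\JJ - e^K\JJ^2$, arrives at the genuinely homogeneous linear equation $\dt A_t = [A_t,e^{K_t}]\JJ_t$ for $A_t=\JJ_t^2+\Id$ (your algebra checks out: the surviving terms are $\JJ^2e^K\JJ - e^K\JJ^3 = [\JJ^2,e^K]\JJ = [A_t,e^K]\JJ$), and then uniqueness for linear ODEs closes the argument cleanly. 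This buys you a proof with no circularity and, as you note, the slightly more general statement that $\JJ_t^2$ is preserved whenever $\JJ_0^2$ is central; the paper's version is shorter but relies on the reader supplying the same uniqueness step.
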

\begin{proof}
Differentiating the expression $\JJ_t^2$ yields
\begin{align*}
\dt \JJ_t^2 = \left(\dt \JJ \right) \JJ + \JJ \left( \dt \JJ \right) = \left(\JJ e^K\JJ +e^K\right)\JJ +\JJ \left(\JJ e^K\JJ +e^K\right)=0.
\end{align*}
Thus $\dt \JJ_t^2 =0$, and since $\JJ_0^2 =-1$ the lemma follows.
\end{proof}

Next we can characterize the condition for these deformations to preserve integrability of $\JJ_t$.

\begin{prop}\label{p:integrability}
Let $\JJ_t$ be a one-parameter family of generalized almost complex structures such that
\begin{align*}
\left. \dt \JJ_t \right|_{t=0} = \Phi_K(\JJ_0).
\end{align*}
Then for $\vec{x}, \vec{y} \in T \oplus T^*$, 
\begin{align*}
\left. \dt \right|_{t=0} N_{\JJ_t}(\vec{x}, \vec{y}) =&\ \i \pi_{0,1} ( dK \left( \pi_T \pi_{1,0}(\vec{y}), \pi_T \pi_{1,0}(\vec{x}), \cdot\right))\\
&\ - \i N_{\JJ_0}(\vec{x}, \vec{y}) + \JJ_0 e^K N_{\JJ_0}(\vec{x}, \vec{y}) + N_{\JJ_0}( e^K \JJ_0 (\vec{x}), \vec{y}) + N_{\JJ_0}(\vec{x}, e^K \JJ_0 \vec{y}).
\end{align*}
\end{prop}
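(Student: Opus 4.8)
The plan is to differentiate the Nijenhuis tensor formula \eqref{f:Nij} directly at $t=0$, tracking how the projection operators and the Courant bracket depend on $\JJ_t$. First I would record the basic variational data: from $\dt\JJ_t = \Phi_K(\JJ_0) = [\JJ_0, e^K\JJ_0] = \JJ_0 e^K \JJ_0 + e^K$ (using $\JJ_0^2 = -1$), and from $\pi_{1,0} = \tfrac12(\Id - \i\JJ)$, $\pi_{0,1} = \tfrac12(\Id + \i\JJ)$, I would compute
\begin{align*}
\dt \pi_{1,0} = -\tfrac{\i}{2}\dt\JJ, \qquad \dt \pi_{0,1} = \tfrac{\i}{2}\dt\JJ,
\end{align*}
all evaluated at $t=0$. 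Then, applying the product rule to $N_{\JJ_t}(\vec x,\vec y) = \pi_{0,1}[\pi_{1,0}\vec x, \pi_{1,0}\vec y]$, there are three types of terms: one where $\dt$ hits the outer $\pi_{0,1}$, and two where it hits one of the inner $\pi_{1,0}$'s.

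The term from the outer projection is $\tfrac{\i}{2}(\dt\JJ)\,[\pi_{1,0}\vec x, \pi_{1,0}\vec y]$, and I would rewrite $[\pi_{1,0}\vec x, \pi_{1,0}\vec y] = \pi_{1,0}[\pi_{1,0}\vec x,\pi_{1,0}\vec y] + \pi_{0,1}[\pi_{1,0}\vec x,\pi_{1,0}\vec y] = \pi_{1,0}[\pi_{1,0}\vec x,\pi_{1,0}\vec y] + N_{\JJ_0}(\vec x,\vec y)$; on the $\pi_{1,0}$ piece $\dt\JJ = \JJ_0 e^K\JJ_0 + e^K$ acts, while $\i\JJ_0$ acts as $-1$ on $N_{\JJ_0}$ since $N$ lands in the $\i$-eigenbundle... wait, I should be careful: $N_{\JJ_0}(\vec x,\vec y) = \pi_{0,1}[\ldots]$ lies in the $(0,1)$-bundle, so $\JJ_0 N_{\JJ_0} = -\i N_{\JJ_0}$, hence $\tfrac{\i}{2}(\JJ_0 e^K\JJ_0 + e^K)N_{\JJ_0}$ together with the analogous $\pi_{0,1}$ contributions will assemble into the terms $-\i N_{\JJ_0} + \JJ_0 e^K N_{\JJ_0}$ after using $\JJ_0 N_{\JJ_0} = -\i N_{\JJ_0}$ once more. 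Similarly, the two inner-projection terms produce $N_{\JJ_0}(e^K\JJ_0\vec x, \vec y)$ and $N_{\JJ_0}(\vec x, e^K\JJ_0\vec y)$ plus a leftover piece of the form $\pi_{0,1}[\JJ_0 e^K\JJ_0(\cdot), \cdot]$ evaluated on $(1,0)$-vectors, which is exactly the term that must reduce to the $dK$-expression.

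The main obstacle is pinning down that residual term and showing it equals $\i\pi_{0,1}(dK(\pi_T\pi_{1,0}\vec y, \pi_T\pi_{1,0}\vec x, \cdot))$. To handle it I would use that on the $\i$-eigenbundle $L$ of $\JJ_0$ the Courant bracket closes, and expand the Courant bracket \eqref{f:Courant} applied to two sections of the form $e^K\JJ_0 \pi_{1,0}\vec x$ etc., using that $e^K$ is a strictly lower-triangular shear: $e^K(X+\xi) = X + \xi + i_X K$. The key identity is that for the lower-triangular perturbation, the Courant bracket picks up exactly $i_Y i_X dK$ from the Lie-derivative and $d$-terms in \eqref{f:Courant}, in the same way that the $i_Y i_X H$ term appears; this is the standard computation showing conjugation by $e^K$ twists $H \mapsto H + dK$ (cf. \cite{GualtieriThesis}), here appearing infinitesimally. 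Only the $(0,1)$-component survives the outer projection, and only the fully antisymmetrized $dK$-piece is tensorial (the non-tensorial Lie derivative pieces cancel precisely because we contract against $\pi_{1,0}\vec x, \pi_{1,0}\vec y \in L$ and the bracket closes on $L$), which yields the stated formula. Throughout I would freely use Lemma \ref{l:variation} to know $\JJ_t$ stays a generalized almost complex structure so that the eigenbundle projections make sense, and the tensoriality of $N_{\JJ_0}$ to drop derivative-of-test-section terms.
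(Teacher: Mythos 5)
Your proposal follows the paper's proof essentially verbatim: differentiate the Nijenhuis tensor via the product rule using $\dt \pi_{0,1}^t = \tfrac{\i}{2}\dt \JJ_t$, regroup using $\Phi_K(\JJ)\vec{z} = 2e^K\pi_{1,0}\vec{z} + 2\i\pi_{1,0}(e^K\JJ\vec{z})$, and extract the $dK$ term from the twisting identity $e^K[a,b] = [e^Ka, e^Kb] + \iota_{a_T}\iota_{b_T}dK$ combined with bilinearity and the vanishing of the Courant bracket on pure covectors. One small correction: the cancellation of the non-tensorial pieces in the residual term is purely algebraic and does not (and cannot) rely on the bracket closing on $L$, since $\JJ_0$ is only assumed to be a generalized \emph{almost} complex structure in this proposition.
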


\begin{proof} For notational simplicity we set $\JJ = \JJ_0$.  We differentiate the formula (\ref{f:Nij}) at $t=0$, using the explicit formulae for the projection maps, to obtain
\begin{gather} \label{int}
\begin{split}
\sdt  & (\pi^t_{0,1}\left[\pi^t_{1,0}(\vec{x}), \pi^t_{1,0}(\vec{y})\right])\\
=&\ \sdt(\pi^t_{0,1})\left[\pi^t_{1,0}(\vec{x}), \pi^t_{1,0}(\vec{y})\right] + \pi^t_{0,1}\left[\sdt\pi^t_{1,0}(\vec{x}), \pi^t_{1,0}(\vec{y})\right] + \pi^t_{0,1}\left[\pi^t_{1,0}(\vec{x}), \sdt\pi^t_{1,0}(\vec{y})\right]\\
=&\ \tfrac{\sqrt{-1}}{2}\left\{ \Phi_K(\JJ)[\pi_{1,0}(\vec{x}), \pi_{1,0}(\vec{y})]-\pi_{0,1}[\Phi_K(\JJ)(\vec{x}), \pi_{1,0}(\vec{y})]-\pi_{0,1}[\pi_{1,0}(\vec{x}),\Phi_K(\JJ)(\vec{y})] \right\}.
\end{split}
\end{gather}

Note that for any $\vec{z}\in T \oplus T^*$ we may write $\vec{z} =2\pi_{1,0}(\vec{z}) + \sqrt{-1}\JJ(\vec{z})$ which leads to  
\begin{equation*}
\begin{aligned}
\Phi_K(\JJ)(\vec{z}) &= (\JJ e^K\JJ + e^K)(\vec{z})\\
 &=\JJ e^K\JJ(\vec{z}) + e^K(2\pi_{1,0}(\vec{z}) +\sqrt{-1}\JJ(\vec{z}))\\
 &= 2e^K\pi_{1,0}(\vec{z}) + \sqrt{-1}\left( e^K\JJ\vec{z} -\sqrt{-1}\JJ(e^K\JJ\vec{z}) \right)\\
 &= 2e^K\pi_{1,0}(\vec{z}) + 2\sqrt{-1}\pi_{1,0}\left(e^K\JJ(\vec{z})\right).
\end{aligned}
\end{equation*}

This observation allows us to rewrite the final line of \ref{int} as
\begin{align*}
... =&\ \sqrt{-1}e^K\pi_{1,0}[\pi_{1,0}(\vec{x}), \pi_{1,0}(\vec{y})]-\pi_{1,0}e^K\JJ[\pi_{1,0}(\vec{x}), \pi_{1,0}(\vec{y})]\\
&\ -\sqrt{-1}\pi_{0,1}[e^K\pi_{1,0}(\vec{x})+\sqrt{-1}\pi_{1,0}e^K\JJ(\vec{x}), \pi_{1,0}(\vec{y})]\\
&\ - \sqrt{-1}[\pi_{1,0}(\vec{x}), e^K\pi_{1,0}(\vec{y})+\sqrt{-1}\pi_{1,0}e^K\JJ(\vec{y})]\\
=&\ \sqrt{-1}e^K\pi_{1,0}[\pi_{1,0}(\vec{x}), \pi_{1,0}(\vec{y})]-\pi_{1,0}e^K\JJ[\pi_{1,0}(\vec{x}), \pi_{1,0}(\vec{y})]\\
&\ -\sqrt{-1}\pi_{0,1}[e^K\pi_{1,0}(\vec{x}), \pi_{1,0}(\vec{y})] -\sqrt{-1}\pi_{0,1}[\pi_{1,0}(\vec{x}), e^K\pi_{1,0}(\vec{y})]\\
&\ +\pi_{0,1}\left([\pi_{1,0} e^K\JJ (\vec{x}),\pi_{1,0}(\vec{y})]+[\pi_{1,0}(\vec{x}), \pi_{1,0}(e^K\JJ\vec{y})]\right).
\end{align*}

Focusing on the first two terms, letting $\vec{z}=[\pi_{1,0}(\vec{x}), \pi_{1,0}(\vec{y})]$,
\begin{align*}
&\sqrt{-1}e^K\pi_{1,0}\vec{z}-\pi_{1,0}e^K\JJ\vec{z}\\
&=\frac{\sqrt{-1}}{2}e^K(\vec{z} -\sqrt{-1}\JJ\vec{z}) - \tfrac{1}{2}(e^K\JJ\vec{z} -\sqrt{-1}\JJ e^K\JJ\vec{z})\\
&= \frac{\sqrt{-1}}{2}(e^K\vec{z}+\JJ e^K\JJ\vec{z})\\
&= \sqrt{-1}\pi_{0,1}e^K\vec{z} + \tfrac{1}{2}\JJ e^K\vec{z} +\frac{\sqrt{-1}}{2}\JJ e^K\JJ\vec{z}\\
&=\sqrt{-1}\pi_{0,1}(e^K\vec{z}) + \tfrac{1}{2}\JJ e^K(\vec{z} + \sqrt{-1}\JJ\vec{z})\\
&= \sqrt{-1}\pi_{0,1}(e^K\vec{z}) + \JJ e^K\pi_{0,1}(\vec{z}).
\end{align*}
Equation \ref{int} now simplifies further to 
\begin{gather}\label{simplification}
\begin{split}
... =&\ \sqrt{-1}\pi_{0,1}\left(e^K[\pi_{1,0}(\vec{x}), \pi_{1,0}(\vec{y})] -[e^K\pi_{1,0}(\vec{x}), \pi_{1,0}(\vec{y})]-[\pi_{1,0}(\vec{x}), e^K\pi_{1,0}(\vec{y})]\right)\\
&\ + \JJ e^K \pi_{0,1} [\pi_{1,0}(\vec{x}), \pi_{1,0}(\vec{y})] + \pi_{0,1} \left( [\pi_{1,0} e^K\JJ (\vec{x}),\pi_{1,0}(\vec{y})]+[\pi_{1,0}(\vec{x}), \pi_{1,0}(e^K\JJ\vec{y})]\right)\\
=&\ \sqrt{-1}\pi_{0,1}\left(e^K[\pi_{1,0}(\vec{x}), \pi_{1,0}(\vec{y})] -[e^K\pi_{1,0}(\vec{x}), \pi_{1,0}(\vec{y})]-[\pi_{1,0}(\vec{x}), e^K\pi_{1,0}(\vec{y})]\right)\\
&\ + \JJ e^K N(\vec{x}, \vec{y}) + N( e^K \JJ (\vec{x}), \vec{y}) + N(\vec{x}, e^K \JJ \vec{y}).
\end{split}
\end{gather}
The Courant bracket satisfies $e^K[X + \xi, Y + \eta] = [e^K(X + \xi), e^K (Y + \eta)] + \iota_X\iota_YdK$.  Using this together with the fact that the Courant bracket involving a section with no tangent vector component vanishes we see that the first term of  (\ref{simplification}) becomes
\begin{align*}
\i \pi_{0,1} & \left( [e^K\pi_{1,0}(\vec{x}), e^K\pi_{1,0}(\vec{y})]-[e^K\pi_{1,0}(\vec{x}), \pi_{1,0}(\vec{y})] -[\pi_{1,0}(\vec{x}), e^K\pi_{1,0}(\vec{y})] + dK \left( \pi_T \pi_{1,0}(\vec{y}), \pi_T \pi_{1,0}(\vec{x}), \cdot\right) \right)\\
=&\ \i \pi_{0,1} \left( - [\pi_{1,0} (\vec{x}), \pi_{1,0}(\vec{y})] +  dK \left( \pi_T \pi_{1,0}(\vec{y}), \pi_T \pi_{1,0}(\vec{x}), \cdot\right) \right)\\
=&\ - \i  N(\vec{x}, \vec{y}) + \i \pi_{0,1} ( dK \left( \pi_T \pi_{1,0}(\vec{y}), \pi_T \pi_{1,0}(\vec{x}), \cdot\right)).
\end{align*}
Collecting these computations gives the result.
\end{proof}

\begin{cor} \label{c:integrability}
Let $\JJ_t$ be a one-parameter family of generalized almost complex structures such that $\JJ_0$ is integrable and for all $t$ one has
\begin{align*}
\dt \JJ = \Phi_{K_t}(\JJ),
\end{align*}
where furthermore for all $\vec{x}, \vec{y} \in T \oplus T^*$ one has
\begin{align} \label{f:JMC}
\pi_{0,1}^t dK_t \left( \pi_T \pi^t_{1,0}(\vec{y}), \pi_T \pi^t_{1,0}(\vec{x}), \cdot\right) =&\ 0.
\end{align}
Then $\JJ_t$ are integrable for each $t$.
\end{cor}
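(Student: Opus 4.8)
The plan is to reduce the global statement about integrability of the entire family $\JJ_t$ to the infinitesimal computation already completed in Proposition~\ref{p:integrability}, via a continuity/ODE argument. The key observation is that Proposition~\ref{p:integrability} computes the $t$-derivative of $N_{\JJ_t}$ at a point $t$ where we assume nothing about $\JJ_t$ being integrable; the five terms of that formula are all linear in $N_{\JJ_0}$ \emph{except} the first term, which is the obstruction term $\i\pi_{0,1}(dK(\pi_T\pi_{1,0}(\vec{y}),\pi_T\pi_{1,0}(\vec{x}),\cdot))$. Hypothesis~(\ref{f:JMC}) is precisely the vanishing of that obstruction term at every time $t$ (with respect to the decomposition induced by $\JJ_t$ itself). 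So the first step is to record that, under~(\ref{f:JMC}), Proposition~\ref{p:integrability} applied at an arbitrary time $t$ (replacing $\JJ_0$ by $\JJ_t$) gives
\begin{align*}
\dt N_{\JJ_t}(\vec{x},\vec{y}) =&\ -\i N_{\JJ_t}(\vec{x},\vec{y}) + \JJ_t e^{K_t} N_{\JJ_t}(\vec{x},\vec{y}) + N_{\JJ_t}(e^{K_t}\JJ_t(\vec{x}),\vec{y}) + N_{\JJ_t}(\vec{x},e^{K_t}\JJ_t\vec{y}).
\end{align*}

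Next I would interpret this as a linear homogeneous first-order ODE for the tensor-valued function $t\mapsto N_{\JJ_t}$. Fix $\vec{x},\vec{y}$; since $N_{\JJ_t}$ is tensorial, it is enough to work at each point of $M$, where $N_{\JJ_t}(\vec{x},\vec{y})$ lives in the finite-dimensional vector space $(T\oplus T^*)_p$. The right-hand side above is a linear function of the full tensor $N_{\JJ_t}$ — the last two terms feed $N_{\JJ_t}$ a pair of arguments that depend on $\vec{x},\vec{y}$ and on $\JJ_t, K_t$, but for fixed $\vec{x},\vec{y}$ these are smooth functions of $t$ — so the whole system ``$\dt N = \mathcal{A}_t(N)$'' is a linear ODE with coefficients depending smoothly on $t$, acting on the finite-dimensional space of sections of $\Lambda^2(T\oplus T^*)^*\otimes(T\oplus T^*)$ over a fixed point (or, if one prefers, on the Banach space of smooth such tensors, treating $\mathcal{A}_t$ as a bounded operator by tensoriality). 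The key structural point is homogeneity: there is no inhomogeneous source term, precisely because~(\ref{f:JMC}) kills the $dK$ term.

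The conclusion then follows from uniqueness for linear ODEs: since $\JJ_0$ is integrable we have $N_{\JJ_0}\equiv 0$, and the zero section is visibly a solution of the homogeneous linear system with that initial value, so by uniqueness $N_{\JJ_t}\equiv 0$ for all $t$ in the interval on which the family is defined. Hence each $\JJ_t$ is integrable. I would write this as: fix $p\in M$ and $\vec{x},\vec{y}\in(T\oplus T^*)_p$; the map $t\mapsto N_{\JJ_t}(\vec{x},\vec{y})$ solves a linear ODE whose coefficients are the linear operators implicit in the displayed formula above, and which vanishes at $t=0$, so it vanishes identically.

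I expect the only real subtlety to be bookkeeping: making sure that when Proposition~\ref{p:integrability} is ``re-centered'' at an arbitrary time $t$, the projections $\pi_{0,1},\pi_{1,0}$, the exponential $e^{K_t}$, and the contraction $\pi_T$ in~(\ref{f:JMC}) are all taken with respect to $\JJ_t$ (and $K_t$) at that same time — which is exactly how hypothesis~(\ref{f:JMC}) is phrased — and that the resulting right-hand side really is \emph{linear and homogeneous} in $N_{\JJ_t}$ with coefficients smooth in $t$. Once that is checked, the argument is a one-line appeal to uniqueness of solutions to linear ODEs; there is no hard analysis, and no need to reprove anything from generalized geometry beyond Proposition~\ref{p:integrability} itself.
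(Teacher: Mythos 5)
Your proposal is correct and follows essentially the same route as the paper: re-center Proposition \ref{p:integrability} at an arbitrary time $t$, observe that hypothesis (\ref{f:JMC}) kills the $dK$ term so that $N_{\JJ_t}$ satisfies a linear homogeneous evolution equation, and conclude from $N_{\JJ_0}=0$ that $N_{\JJ_t}\equiv 0$. The paper packages the final step as a Gronwall estimate on $\brs{N_{\JJ_t}}^2$ rather than invoking uniqueness for linear ODEs, but this is the same argument.
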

\begin{proof} Choosing any Hermitian metric on $(T \oplus T^*) \otimes \mathbb C$, using Proposition \ref{p:integrability} and the hypothesis (\ref{f:JMC}) one directly derives for all $t$
\begin{align*}
\dt \brs{N_{\JJ_t}}^2 \leq&\ C(K,\JJ) \brs{N_{\JJ_t}}^2.
\end{align*}
Since $N_{\JJ_0} = 0$ the result follows from Gronwall's inequality.
\end{proof}

\begin{rmk} \label{integrability2}
We may also formulate integrability of generalized complex structures in terms of its $-\sqrt{-1}$-eigenbundle, $\bar{L}_t$, being Courant integrable. Replicating the above arguments with the roles of $\pi_{1,0}$ and $\pi_{0,1}$ reversed we obtain that the relevant condition on $K_t$ is 
\begin{align*}
0 = \pi_{0,1}^t dK_t \left( \pi_T \pi_{0,1}(\vec{y}), \pi_T \pi_{0,1}(\vec{x}), \cdot\right).
\end{align*}
\end{rmk}

\section{Variations of generalized K\"ahler structure} \label{s:VGK}

Having defined certain variations of generalized complex structure, we now extend this to defining variations of generalized K\"ahler structure.  A naive guess would be that we should simply take a variation of one of the underlying generalized complex structures and seek the further integrability conditions.  However, for reasons to be illuminated by the examples below, it is much more natural to vary \emph{both} generalized complex structures by a single $B$-field as described in \S \ref{ss:GCvar}.

\subsection{Background} \label{ss:GKbck}

A generalized K\"ahler structure is a pair of commuting generalized complex structures $\JJ_1, \JJ_2$ such that $\GG = - \JJ_1 \JJ_2$ defines a generalized metric, i.e. $\IP{\GG \cdot, \cdot}$ is a positive definite inner product on $T \oplus T^*$, where $\IP{,}$ denotes the symmetric neutral inner product on $T \oplus T^*$, i.e.
\begin{align*}
\IP{X + \xi, Y + \eta} =&\ \tfrac{1}{2} \left( \xi(Y) + \eta(X) \right).
\end{align*}
A fundamental theorem of Gualtieri (\cite{GualtieriThesis} Chapter 6) says that a generalized K\"ahler structure $(\JJ_1, \JJ_2)$ as defined here corresponds to a bihermitian structure $(g, I, J, b)$, with K\"ahler forms $\gw_I, \gw_J$, as described in the introduction.  The explicit relationship is given by 
\begin{equation}\label{f:Gualtierimap}
\JJ_{1/2}=\tfrac{1}{2}e^b\left( \begin{array}{cc}
I\pm J & -(\omega_I^{-1} \mp \omega_J^{-1}) \\[2pt]
\omega_I \mp \omega_J & -(I^*\pm J^*)
\end{array} \right)e^{-b}.
\end{equation}
We recall that a generalized K\"ahler structure induces a fourfold decomposition of the complexified generalized tangent bundle.  Specifically, letting $L_i$ and $\bar{L}_i$ denote the $\pm\sqrt{-1}$-eigenbundles of $\JJ_i$ respectively, we have the following decomposition: $$(T\oplus T^*)\otimes\mathbb{C} =L_1^+\oplus L_1^-\oplus \bar{L_1^-}\oplus \bar{L_1^+}:= (L_1\cap L_2)\oplus (L_1\cap \bar{L}_2)\oplus (\bar{L_1}\cap L_2)\oplus (\bar{L}_1\cap \bar{L}_2).$$

\subsection{Definitions}

\begin{defn} \label{d:GKvariation}  A one-parameter family of generalized K\"ahler structures $(\JJ_1^t, \JJ_2^t)$ is a \emph{canonical family} if for all $t$, there exists $K_t \in \Lambda^2$ such that
\begin{align*}
\dt \JJ_1^t =&\ \Phi_{K_t}(\JJ^t_1), \qquad \dt \JJ_2^t = \Phi_{K_t}(\JJ^t_2).
\end{align*}
Given $\til{\JJ}_1, \til{\JJ}_2$ another generalized K\"ahler structure, we define an equivalence relation where
\begin{align*}
(\til{\JJ}_1, \til{\JJ}_2) \sim (\JJ_1, \JJ_2)
\end{align*}
if and only if there exists a canonical family $(\JJ_1^t, \JJ_2^t)$, $t \in [0,1]$, such that $(\JJ_1^0, \JJ_2^0) = (\JJ_1, \JJ_2)$, $(\JJ_1^1, {\JJ}_2^1) = (\til{\JJ}_1, \til{\JJ}_2)$.  Furthermore, the \emph{generalized K\"ahler cone associated to  $(\JJ_1, \JJ_2)$} is
\begin{align*}
\mathcal{GK} (\JJ_1, \JJ_2) = \{ (\til{\JJ}_1, \til{\JJ}_2) \mbox{ generalized K\"ahler } |\ (\til{\JJ}_1, \til{\JJ}_2) \sim (\JJ_1, \JJ_2) \}.
\end{align*}
\end{defn}

\subsection{Compatibility Condition}

We first address the condition required for a canonical deformation to preserve the algebraic compatibility condition of generalized K\"ahler structures.  We first prove a formal lemma reducing this to an algebraic condition on $K$, then analyze this explicitly using the Gualtieri map.

\begin{lemma} \label{l:GKvariationlemma}  Let $\JJ_1^t, \JJ_2^t$ be one-parameter families of generalized almost complex structures, with $[\JJ_1^0, \JJ_2^0] = 0$, which satisfy
\begin{align*}
\dt \JJ_1^t = \Phi_{K_t}(\JJ_1), \qquad  \dt \JJ_2^t = \Phi_{K_t}(\JJ_2).
\end{align*}
Then $[\JJ_1^t, \JJ_2^t] = 0$ for all $t$ if and only if
\begin{gather*}
\begin{split}
 [\Phi_K(\JJ_1), \JJ_2] = [\Phi_K(\JJ_2), \JJ_1]
\end{split}
\end{gather*}
for all $t$.
\end{lemma}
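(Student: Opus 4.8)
\medskip

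The plan is to reduce the condition $[\JJ_1^t,\JJ_2^t]=0$ for all $t$ to a differential-equation/Gr\"onwall argument, exactly as in the proof of Corollary \ref{c:integrability}, and then identify the vanishing of the relevant time-derivative with the stated algebraic identity. First I would set $C_t := [\JJ_1^t,\JJ_2^t]$ and compute $\dt C_t$ using the defining equations $\dt\JJ_i=\Phi_{K_t}(\JJ_i)$. By the Leibniz rule,
\begin{align*}
\dt C_t = [\Phi_{K_t}(\JJ_1),\JJ_2] + [\JJ_1,\Phi_{K_t}(\JJ_2)].
\end{align*}
The point is that this is \emph{not} obviously of the form ``bounded operator times $C_t$'' — the naive expansion produces terms that need not vanish when $C_t=0$ — so the heart of the argument is to massage $\dt C_t$ into a sum of a term that is manifestly a (time-dependent, smooth) linear expression in $C_t$ plus the obstruction term $[\Phi_K(\JJ_1),\JJ_2]-[\Phi_K(\JJ_2),\JJ_1]$.

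\medskip

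Concretely, I would substitute $\Phi_{K}(\JJ_i)=\JJ_i e^K\JJ_i + e^K$ (Definition \ref{d:vardef}) and expand both commutators, using $[\JJ_1,\JJ_2]=C$ repeatedly to commute factors of $\JJ_1$ past $\JJ_2$, each such move introducing a term involving $C$. The goal of this bookkeeping is an identity of the schematic form
\begin{align*}
\dt C_t = A_t(C_t) + \bigl([\Phi_{K_t}(\JJ_1),\JJ_2] - [\Phi_{K_t}(\JJ_2),\JJ_1]\bigr),
\end{align*}
where $A_t$ is a linear map on $\End(T\oplus T^*)$ built algebraically from $\JJ_1^t,\JJ_2^t,e^{K_t}$ (hence with operator norm bounded locally uniformly in $t$), and $A_t(0)=0$. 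Given such a decomposition, the two implications of the ``if and only if'' are immediate: if the algebraic identity holds for all $t$, then $C_t$ solves the linear ODE $\dt C_t=A_t(C_t)$ with $C_0=0$, so $C_t\equiv 0$ by uniqueness (or by the Gr\"onwall estimate $\dt|C_t|^2\le c(t)|C_t|^2$ with respect to any fixed metric on $(T\oplus T^*)\otimes\mathbb{C}$, exactly as in Corollary \ref{c:integrability}); conversely, if $C_t\equiv 0$ then $\dt C_t\equiv 0$ and $A_t(C_t)\equiv 0$, forcing the obstruction term to vanish for all $t$.

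\medskip

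The main obstacle is purely computational: establishing the decomposition above, i.e. showing that after expanding $[\JJ_1 e^K\JJ_1 + e^K,\JJ_2] + [\JJ_1,\JJ_2 e^K\JJ_2 + e^K]$ and subtracting off the obstruction term $[\Phi_K(\JJ_1),\JJ_2]-[\Phi_K(\JJ_2),\JJ_1]$, what remains is expressible entirely in terms of $C_t=[\JJ_1^t,\JJ_2^t]$ (with operator coefficients), and in particular vanishes when $C_t=0$. A clean way to organize this: work at a fixed time, write $\JJ_2 = \JJ_1^{-1}(\JJ_1\JJ_2) $ is unhelpful, so instead one tracks, for each monomial in $\JJ_1,\JJ_2,e^K$ appearing in the expansion, the ``correction'' incurred by reordering all $\JJ_1$'s to the left of all $\JJ_2$'s; the sum of uncorrected monomials from $[\Phi_K(\JJ_1),\JJ_2]$ and from $[\Phi_K(\JJ_2),\JJ_1]$ must be arranged to cancel against each other up to the single obstruction bracket, leaving only $C_t$-terms. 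I would also use $\JJ_i^2=-1$ (valid for all $t$ by Lemma \ref{l:variation}) freely to collapse square factors. This is routine but delicate algebra; once it is in place the analytic conclusion is the same Gr\"onwall argument already used above, so I would present the ODE step briefly and devote the write-up to the algebraic identity.
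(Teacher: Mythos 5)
Your overall skeleton (set $C_t=[\JJ_1^t,\JJ_2^t]$, differentiate, relate $\dt C_t$ to the bracket identity, and integrate from $C_0=0$) is exactly the paper's argument, but the ``main obstacle'' to which you devote most of the proposal does not exist. The formula you obtain from the Leibniz rule,
\begin{align*}
\dt C_t \;=\; [\Phi_{K_t}(\JJ_1),\JJ_2]+[\JJ_1,\Phi_{K_t}(\JJ_2)],
\end{align*}
is already \emph{identically} equal to the obstruction term $[\Phi_{K_t}(\JJ_1),\JJ_2]-[\Phi_{K_t}(\JJ_2),\JJ_1]$, simply because $[\JJ_1,\Phi_{K_t}(\JJ_2)]=-[\Phi_{K_t}(\JJ_2),\JJ_1]$. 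So your sought decomposition holds with $A_t\equiv 0$: the ``naive expansion'' produces no extra terms at all, and the entire program of expanding $\Phi_K(\JJ_i)=\JJ_ie^K\JJ_i+e^K$, reordering monomials, and invoking $\JJ_i^2=-1$ is unnecessary (as is Gr\"onwall --- $\dt C_t\equiv 0$ with $C_0=0$ gives $C_t\equiv 0$ by direct integration, and conversely $C_t\equiv 0$ forces $\dt C_t\equiv 0$, which \emph{is} the bracket identity). This two-line observation is precisely the paper's proof of Lemma \ref{l:GKvariationlemma}; the situation you seem to be pattern-matching from, where the time-derivative really must be massaged into ``bounded operator applied to the quantity plus an obstruction'' and a Gr\"onwall estimate is genuinely needed, is Corollary \ref{c:integrability} for the Nijenhuis tensor, not this lemma. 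As written your proof is not wrong, but it is incomplete on its own terms (the ``delicate algebra'' is announced and deferred rather than done); once you notice that the remainder is zero, nothing remains to be checked.
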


\begin{proof}
Differentiating $[\JJ_1^t, \JJ_2^t]$ at any time $t$ shows 
\begin{align*}
\dt [\JJ_1^t, \JJ_2^t] &= \Phi_K(\JJ_1)\JJ_2 + \JJ_1\Phi_K(\JJ_2) - \Phi_K(\JJ_2)\JJ_1 - \JJ_2\Phi_K(\JJ_1)\\
& =[\Phi_K(\JJ_1), \JJ_2] -[\Phi_K(\JJ_2), \JJ_1].
\end{align*}
Since $[\JJ_1^0, \JJ_2^0] = 0$, the result follows.
\end{proof}

Here we reformulate the compatibility condition of Lemma \ref{l:GKvariationlemma} by expanding the necessary equation in terms of the Gualtieri map and analyzing the result, which simplifies dramatically.
\begin{prop} \label{p:GKcompatibilityprop} Given $M$ a smooth manifold and $(\JJ_1, \JJ_2)$ a generalized K\"ahler structure, for $K \in \Lambda^2$ one has
\begin{align*}
[\Phi_K(\JJ_1), \JJ_2] = [\Phi_K(\JJ_2), \JJ_1]
\end{align*}
if and only if
 \begin{align*}
 K \in \Lambda^{1,1}_{J}.
 \end{align*}
\end{prop}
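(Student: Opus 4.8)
The plan is to unwind both sides of the identity $[\Phi_K(\JJ_1), \JJ_2] = [\Phi_K(\JJ_2), \JJ_1]$ using the Gualtieri map (\ref{f:Gualtierimap}) and reduce everything to a statement about the single $2$-form $K$. First I would observe that conjugation by $e^b$ commutes with the bracket in the sense that $[\Phi_{K}(e^b A e^{-b}), e^b B e^{-b}] = e^b [\Phi_{e^{-b} K e^{-b}}(A), B] e^{-b}$ — more precisely, since $e^b$ acts on $\End(T\oplus T^*)$ by conjugation, one has $\Phi_K(e^b \JJ e^{-b}) = e^b \Phi_{b^{\flat}K}(\JJ) e^{-b}$ for an appropriately transformed form, so WLOG we may set $b = 0$ and work with the untwisted Gualtieri map. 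Then $\JJ_1, \JJ_2$ are built from $I, J, \gw_I, \gw_J$ in block form, and $\Phi_K(\JJ_i) = \JJ_i e^K \JJ_i + e^K$ is a $2\times 2$ block matrix whose entries are explicit bilinear expressions in $K$, $I$, $J$, $\gw_I^{\pm 1}$, $\gw_J^{\pm 1}$.

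The key computational step is to evaluate all four block entries of $\Phi_K(\JJ_1)$ and of $\Phi_K(\JJ_2)$. Here it is cleanest to split $K$ according to type with respect to $J$ (or symmetrically $I$), writing $K = K^{2,0} + K^{1,1} + K^{0,2}$, and to use the identities $J^* \gw_J^{-1} = -\gw_J^{-1} J$ (equivalently $\gw_J$ is $J$-invariant) together with $\gw_I = g I$, $\gw_J = gJ$ relating the two K\"ahler forms through the common metric $g$. The terms $[\Phi_K(\JJ_1), \JJ_2] - [\Phi_K(\JJ_2), \JJ_1]$ then collapse: many contributions cancel by the symmetry $\JJ_{1} \leftrightarrow \JJ_2$ being exactly the $J \to -J$ symmetry in (\ref{f:Gualtierimap}), so only the part of the expression that is \emph{odd} under $J \mapsto -J$ survives, and one checks that this odd part vanishes precisely when the components of $K$ that fail to be $J$-invariant (i.e. $K^{2,0} + K^{0,2}$, equivalently the part anticommuting with $J^*$ in the appropriate sense) are zero. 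That is, the difference of the two brackets is, up to an invertible factor, the $J$-anti-invariant part of $K$, giving both directions of the equivalence at once.

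The main obstacle I anticipate is purely bookkeeping: the block entries of $\Phi_K(\JJ_i)$ involve sums of up to four terms each, and the bracket $[\cdot,\cdot]$ of two such matrices produces a large number of terms, so organizing the cancellation without error is the crux. The conceptual shortcut that makes this tractable is to exploit the $J \mapsto -J$ symmetry systematically from the start — noting that $\JJ_1 \leftrightarrow \JJ_2$ under this symmetry, while $\Phi_K$ and the condition $K \in \Lambda^{1,1}_J$ behave in a controlled way — so that one only ever has to track the symmetric-versus-antisymmetric decomposition rather than all terms individually. A secondary subtlety is handling the case where $\gw_I \mp \gw_J$ may be degenerate (so $\gw_I^{-1} \mp \gw_J^{-1}$ need not literally make sense as written); this is dealt with by interpreting the Gualtieri map entries correctly via the $\pm$-eigenspaces of $I^{-1}J$, or by first doing the generic nondegenerate computation and then invoking continuity/density, since the final algebraic identity is polynomial in the structure data.
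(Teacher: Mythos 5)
Your approach is essentially the paper's: conjugate away the $b$-field, expand $\Phi_K(\JJ_i)$ in block form via the Gualtieri map, and identify the difference $[\Phi_K(\JJ_1),\JJ_2]-[\Phi_K(\JJ_2),\JJ_1]$ with (invertible multiples of) the $J$-anti-invariant part of $K$, so that vanishing is equivalent to $KJ=-J^*K$, i.e.\ $K\in\Lambda^{1,1}_J$; your $J\mapsto -J$ symmetry is a reasonable way to organize what the paper carries out by direct block computation. Two small corrections: no transformed form ``$b^{\flat}K$'' is needed, since $e^K$ and $e^b$ commute (both are unipotent lower-triangular with $2$-form entries), giving $\Phi_K(e^b\Upsilon e^{-b})=e^b\Phi_K(\Upsilon)e^{-b}$ with the \emph{same} $K$; and your worry about degeneracy is a misreading of the Gualtieri map, whose off-diagonal entry is $\omega_I^{-1}\mp\omega_J^{-1}$, a sum/difference of the inverses of the individually nondegenerate forms $\omega_I,\omega_J$, not the inverse of $\omega_I\mp\omega_J$, so no continuity or density argument is required.
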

\begin{proof}
Let $\Upsilon_{1/2}=\tfrac{1}{2} \begin{pmatrix}
(I\pm J) & -(\omega_I^{-1} \mp \omega_J^{-1}) \\
(\omega_I \mp \omega_J) & -(I^*\pm J^*) \end{pmatrix}$, so that \ref{f:Gualtierimap} can be expressed as $\JJ_{1/2} = e^b\Upsilon_{1/2}e^{-b}$. Using this notation and the fact that $e^K$ and $e^b$ commute, it follows easily that
\begin{align*}
[\Phi_K(\JJ_1), \JJ_2] =&\ e^b[\Phi_K(\Upsilon_1), \Upsilon_2]e^{-b},\\
[\Phi_K(\JJ_2), \JJ_1)] =&\ e^b[\Phi_K(\Upsilon_2), \Upsilon_1]e^{-b}.
\end{align*}
Hence $[\Phi_K(\JJ_1), \JJ_2]=[\Phi_K(\JJ_2), \JJ_1)]$ reduces to the condition $[\Phi_K(\Upsilon_1), \Upsilon_2]=[\Phi_K(\Upsilon_2), \Upsilon_1]$. As a first step, we record the simplified forms of $\Phi_K(\Upsilon_{1/2})$ obtained through a direct computation:

\begin{equation*}\label{p:decomp}
\Phi_K(\Upsilon_{1/2})=\frac{1}{4}\left( \begin{matrix}
-(\omega_I^{-1}\mp\omega_J^{-1})K(I\pm J) & (\omega_I^{-1} \mp \omega_J^{-1})K(\omega_I^{-1}\mp\omega_J^{-1}) \\[6pt]
4K-(I^*\pm J^*)K(I\pm J)& (I^*\pm J^*)K(\omega_I^{-1}\mp \omega_J^{-1}) \\
\end{matrix} \right).
\end{equation*}
Further tedious computation yields
\begin{align*}
[\Phi_K(\JJ_1), \JJ_2] &= \tfrac{1}{2}\begin{pmatrix} (\omega_I^{-1}+\omega_J^{-1})K+g^{-1}K(I+J) & (\omega_I^{-1}-\omega_J^{-1})Kg^{-1}-g^{-1}K(\omega_I^{-1}-\omega_J^{-1}) \\[6pt]
 K(I-J) +(I^*-J^*)K&(I^*+J^*)Kg^{-1}-K(\omega_I^{-1}+\omega_J^{-1})\end{pmatrix}, \\[6pt]
 [\Phi_K(\JJ_2), \JJ_1] &= \tfrac{1}{2}\begin{pmatrix} (\omega_I^{-1}-\omega_J^{-1})K+g^{-1}K(I-J) & (\omega_I^{-1}+\omega_J^{-1})Kg^{-1}-g^{-1}K(\omega_I^{-1}+\omega_J^{-1}) \\[6pt]
 K(I+J) +(I^*+J^*)K&(I^*-J^*)Kg^{-1} -K(\omega_I^{-1}-\omega_J^{-1})\end{pmatrix}.
\end{align*}
By comparing each entry of the matrices above, we see equality holds if and only if $KJ = -J^*K$, as required.
\end{proof}

\subsection{Integrability Condition}

We next address the integrability condition.  Since our deformations should preserve integrability of each generalized complex structure $\JJ_i$, Proposition \ref{p:integrability} yields two partial integrability conditions which $K$ must satisfy.  We again emphasize that neither of these conditions alone will force $d K = 0$, while somewhat surprisingly the combination of the two conditions does.

\begin{prop} \label{p:GKint} Given $M$ a smooth manifold and $(\JJ_1, \JJ_2)$ a generalized K\"ahler structure, for $K \in \Lambda^2$ one has
\begin{align*}
\pi_{0,1}^{\JJ_i} \left(\iota_X\iota_YdK\right) =&\ 0 \quad \text{\ for all \ }  X, Y \in \pi_T(L_i), \quad i = 1,2.
\end{align*}
if and only if
\begin{align*}
d K = 0.
\end{align*}
\begin{proof} The sufficiency of $dK = 0$ is obvious, we prove it is necessary.  Note that for a pure covector $\xi$, one has $\JJ_{1/2}\xi =e^b\Upsilon_{1/2}\xi =-\tfrac{1}{2} \begin{pmatrix}(\omega_I^{-1}\mp\omega_J^{-1})\xi\\ b(\omega_I^{-1}\mp\omega_J^{-1})\xi + (I^*\pm J^*)\xi \end{pmatrix}$ and so 

\begin{align}
\label{projLbar}\pi_{0,1}^{1/2}(\xi)&=\begin{pmatrix}\frac{-\sqrt{-1}}{2}(\omega_I^{-1}\mp\omega_J^{-1})\xi\\ \xi-\frac{\sqrt{-1}}{2}\left(b(\omega_I^{-1}\mp\omega_J^{-1})\xi + (I^*\pm J^*)\xi \right) \end{pmatrix}.
\end{align}

Fix vectors $X, Y \in T^{1,0}_I$, and then choose lifts $X_+, Y_+$ to $C_+$, the $+1$-eigenspace of $\GG$.  Using the representation of $\JJ_i$ with respect to the $\pm 1$-eigenspace decomposition induced by $\GG$ (\cite{GualtieriThesis} Proposition 6.12), it follows that $X_+, Y_+ \in L_1 \cap L_2 = L_1^+$.  Now let $\xi = i_Y i_X d K$, and note that Proposition \ref{p:integrability} applied to both $\JJ_1$ and $\JJ_2$ implies that
\begin{align*}
\pi_{0,1}^{1/2}(\xi) =0.
\end{align*}
Comparing against equation (\ref{projLbar}) we obtain $(\omega_I^{-1}\mp \omega_J^{-1})(\xi)=0$. Therefore $\iota_Y\iota_XdK= 0$, for all $X,Y \in T^{1,0}_I$.  Since $K$ is real it follows that  $dK = 0$.
\end{proof}
\end{prop}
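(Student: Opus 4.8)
The plan is to reduce the two pointwise partial-integrability conditions, via the Gualtieri map and the known fourfold decomposition of $(T\oplus T^*)\otimes\mathbb{C}$, to the single classical statement that $\iota_Y\iota_X dK = 0$ for all $X, Y \in T^{1,0}_I$. Since $dK$ is a real three-form, this forces $dK = 0$ (its $(2,1)+(1,2)$ and $(3,0)+(0,3)$ components all vanish, noting the $(1,1)$-in-$J$ condition on $K$ from Proposition \ref{p:GKcompatibilityprop} already constrains things, though in fact the vanishing of $\iota_Y\iota_X dK$ on all of $T^{1,0}_I$ is already enough by reality). The geometric point that makes this work is that the two generalized complex structures $\JJ_1, \JJ_2$ ``see'' the tangent directions $T^{1,0}_I$ through the \emph{same} lift into $C_+$, the $(+1)$-eigenspace of the generalized metric $\GG = -\JJ_1\JJ_2$: a vector $X \in T^{1,0}_I$ lifts to $X_+ \in C_+$, and under Gualtieri's Proposition 6.12 identification this lift lies in $L_1 \cap L_2 = L_1^+$. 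So applying Proposition \ref{p:integrability} (or rather the resulting condition (\ref{f:JMC})) for \emph{both} $i = 1$ and $i = 2$ to the covector $\xi = \iota_Y\iota_X dK$ gives $\pi_{0,1}^{\JJ_1}(\xi) = 0$ and $\pi_{0,1}^{\JJ_2}(\xi) = 0$ simultaneously.

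First I would record the action of the Gualtieri map on a pure covector: using $\JJ_{1/2} = e^b \Upsilon_{1/2} e^{-b}$ and the explicit form of $\Upsilon_{1/2}$, a covector $\xi$ (which is annihilated by $e^{-b}$ up to itself, since $e^{-b}$ acts trivially on $T^*$) maps to a combination whose $T$-component is $-\tfrac12(\omega_I^{-1}\mp\omega_J^{-1})\xi$ and whose $T^*$-component involves $b$ and $(I^*\pm J^*)$. From this I extract the projection $\pi_{0,1}^{1/2}(\xi) = \tfrac12(\xi - \sqrt{-1}\JJ_{1/2}\xi)$ explicitly, as in (\ref{projLbar}). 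Second, I would set up the lift: fix $X, Y \in T^{1,0}_I$, take lifts $X_+, Y_+ \in C_+$, observe via Gualtieri Proposition 6.12 that these sit inside $L_1 \cap L_2 = L_1^+$, so that $\pi_T\pi^{\JJ_i}_{1,0}$ applied to $X_+$ recovers $X$ (up to the fixed normalization) for \emph{both} $i$. Third, take $\xi = \iota_{Y}\iota_{X}\, dK = dK(X, Y, \cdot)$ — which by the tensoriality is the object appearing in the hypothesis for both $\JJ_1$ and $\JJ_2$ — and conclude $\pi_{0,1}^{1/2}(\xi) = 0$. Fourth, read off from (\ref{projLbar}) that the $T$-components vanish: $(\omega_I^{-1} - \omega_J^{-1})\xi = 0$ and $(\omega_I^{-1} + \omega_J^{-1})\xi = 0$, hence $\omega_I^{-1}\xi = 0$, hence $\xi = 0$ since $\omega_I$ is nondegenerate. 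Thus $\iota_Y\iota_X dK = 0$ for all $X, Y \in T^{1,0}_I$, and reality of $dK$ finishes it.

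The main obstacle I anticipate is the bookkeeping in the third step: carefully matching the hypothesis of Proposition \ref{p:integrability}, which is phrased in terms of $\pi_T\pi^{\JJ_i}_{1,0}(\vec x)$ for arbitrary $\vec x \in T\oplus T^*$, against the covector $dK(X, Y, \cdot)$ built from honest $I$-holomorphic tangent vectors. One must verify that choosing $\vec x = X_+$, $\vec y = Y_+$ (the $C_+$-lifts) indeed produces $\pi_T\pi^{\JJ_i}_{1,0}(\vec x) = X$ (up to a harmless constant absorbed into the argument) for both $i=1$ and $i=2$ — this is exactly where the shared-lift fact $X_+ \in L_1 \cap L_2$ is essential, and where one leans on Gualtieri's eigenspace description. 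The other place requiring care is the very last line: one should confirm that $\iota_Y\iota_X dK = 0$ for all $X,Y \in T^{1,0}_I$ genuinely kills the full real three-form $dK$; this is the standard fact that a real form all of whose $(p,q)$-components with $p \geq 2$ (equivalently, contracting two $(1,0)$-vectors) vanish must vanish, since conjugation then also kills the $q \geq 2$ part, and a real $3$-form has no $(1,1,1)$-type middle piece — every component of a $3$-form has at least two slots of the same type.
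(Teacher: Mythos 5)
Your proposal is correct and follows essentially the same route as the paper: lift $X,Y\in T^{1,0}_I$ into $C_+$ so that the lifts land in $L_1\cap L_2$, apply the partial integrability condition for both $\JJ_1$ and $\JJ_2$ to the pure covector $\xi=\iota_Y\iota_X dK$, read off $(\omega_I^{-1}\mp\omega_J^{-1})\xi=0$ from the explicit form of $\pi_{0,1}^{1/2}$ on covectors, and conclude $\xi=0$ and then $dK=0$ by reality. Your elaboration of the final reality step (every component of a $3$-form has at least two slots of the same type) correctly fills in a detail the paper leaves implicit.
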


\begin{proof}[Proof of Theorem \ref{t:maindeformationthm}] Fix $(\JJ_1, \JJ_2)$ generalized K\"ahler and fix $(\JJ_1^t, \JJ_2^t)$ a one-parameter family as in the statement.  First let us assume conditions (1), (2), and (3) hold for this family.  Since $dK_t = 0$ for all $t$, it follows from Corollary \ref{c:integrability} that $(\JJ_1^t, \JJ_2^t)$ are integrable generalized complex structures.  Furthermore, using that $K_t \in \Lambda^{1,1}_{J^t}$ for all $t$, it follows from Lemma \ref{l:variation} and Proposition \ref{p:GKcompatibilityprop} that $[\JJ_1^t, \JJ_2^t] = 0$ for all $t$.  Since we have assumed the positivity of $\IP{- \JJ_1 \JJ_2 \cdot, \cdot}$ in condition (3), it follows that $(\JJ_1^t, \JJ_2^t)$ is generalized K\"ahler for all $t$.

Conversely, suppose $(\JJ_1^t, \JJ_2^t)$ defines a generalized K\"ahler structure for all $t$.  Condition (3) then holds by definition, and condition (1) holds by Lemma \ref{l:variation} and Proposition \ref{p:GKcompatibilityprop}.  As the structures $(\JJ_1^t, \JJ_2^t)$ are assumed integrable for all times $t$, their Nijenhuis tensors vanish for all $t$, and thus it follows from Proposition \ref{p:integrability} that
\begin{align*}
0 =&\ \pi_{0,1}^{\JJ_i} d K \left( \pi_T \pi_{1,0}^{\JJ_i} (\vec{x}), \pi_T \pi_{1,0}^{\JJ_i} (\vec{y}) , \cdot \right)
\end{align*}
for all $\vec{x}, \vec{y} \in T \oplus T^*$, all $t$, and $i=1,2$.  It then follows from Proposition \ref{p:GKint} that $dK = 0$, as required.
\end{proof}

With this characterization of canonical deformations in hand, we can now give the definition of generalized K\"ahler \emph{classes} which emerges naturally from Theorem \ref{t:maindeformationthm}.

\begin{defn} \label{d:GKclass} Let $M$ be a smooth manifold.  An \emph{exact canonical deformation} is a one-parameter family of generalized K\"ahler structures $(\JJ^t_1, \JJ^t_2)$ such that, for all $t$,
\begin{align*}
\dt \JJ_i^t = \Phi_{d a_t} \JJ_i^t,
\end{align*}
for some $a_t \in \Lambda^1$.  Given $\til{\JJ}_1, \til{\JJ}_2$ another generalized K\"ahler structure, we define an equivalence relation where
\begin{align*}
(\til{\JJ}_1, \til{\JJ}_2) \sim_{\mbox{\tiny{exact}}} (\JJ_1, \JJ_2)
\end{align*}
if and only if there exists an exact canonical deformation $(\JJ_1^t, \JJ_2^t)$, $t \in [0,1]$, such that $(\JJ_1^0, \JJ_2^0) = (\JJ_1, \JJ_2)$, $(\JJ_1^1, {\JJ}_2^1) = (\til{\JJ}_1, \til{\JJ}_2)$.  Furthermore, the \emph{generalized K\"ahler class of $(\JJ_1, \JJ_2)$} is
\begin{align*}
[(\JJ_1, \JJ_2)] = \{ (\til{\JJ}_1, \til{\JJ}_2) \mbox{ generalized K\"ahler } |\ (\til{\JJ}_1, \til{\JJ}_2) \sim_{\mbox{\tiny{exact}}} (\JJ_1, \JJ_2) \}.
\end{align*}
\end{defn}

\subsection{Induced variations}

In this section we derive the variation on the associated bihermitian data induced by a canonical deformation through an analysis of the Gualtieri map.

\begin{prop} \label{p:GKvariation} Let $(\JJ_1^t, \JJ_2^t)$ be a canonical family, and let $(g_t, b_t, I_t, J_t)$ denote the corresponding $1$-parameter family of bihermitian data.  Then
\begin{gather} \label{f:BHvariation}
\begin{split}
\dot{g} =-\tfrac{1}{2}[K, I],&\ \qquad \dot{b} =-\tfrac{1}{2}\{K, I\},\\
\dot{\gw}_I =-\tfrac{1}{2}[K,I]I, &\ \qquad \dot{\gw}_J =-\tfrac{1}{2}\{K, IJ\}, \\
\dot{I} =0, &\ \qquad \dot{J} =\tfrac{1}{2}[I,J]g^{-1}K.
\end{split}
\end{gather}
\begin{proof} We use the notation and computations of Proposition \ref{p:GKcompatibilityprop}.  In particular, we recall that 
\begin{align*}
e^{-b} \dot{\JJ}_{1/2} e^b = \frac{1}{4}\begin{pmatrix}-(\omega_I^{-1}\mp\omega_J^{-1})K(I\pm J)&(\omega_I^{-1}\mp\omega_J^{-1})K(\omega_I^{-1}\mp \omega_J^{-1})\\[4pt] 4K-(I^*\pm J^*)K(I\pm J)&(I^*\pm J^*)K(\omega_I^{-1}\mp\omega_J^{-1}) \end{pmatrix}.
\end{align*}
On the other hand, writing expression \ref{p:decomp} as $\JJ_{1/2} = \tfrac{1}{2}e^b\Upsilon_{1/2}e^{-b}$ and differentiating, using the fact that $e^b\dot{e}^b = \dot{e}^b = \dot{e}^be^b$, yields 
\begin{align}\label{J_i-derivative}
\begin{split}
e^{-b}\dot{\JJ}_{1/2}e^b =&\ \tfrac{1}{2} e^{-b} \left( \dot{e}^b\Upsilon_{1/2}e^{-b}+ e^b\dot{\Upsilon}_{1/2}e^{-b}- e^b \Upsilon_{1/2}\dot{e}^b \right) \\
=&\ \tfrac{1}{2}[\dot{e}^b, \Upsilon_{1/2}] + \tfrac{1}{2}\dot{\Upsilon}_{1/2}\\
=&\ \tfrac{1}{2}\begin{pmatrix}(\omega_I^{-1}\mp\omega_J^{-1})\dot{b}+(\dot{I}\pm\dot{J})& -(\dot{\omega}_I^{-1}\mp\dot{\omega}_J^{-1})\\[6pt] \{ \dot{b}, I\pm J\}+(\dot{\omega}_I\mp \dot{\omega}_J)& -\dot{b}(\omega_I^{-1}\mp \omega_J^{-1}) -(\dot{I}^*\pm \dot{J}^*) \end{pmatrix},
\end{split}
\end{align}
Then equating the appropriate expressions coming from above shows
\begin{align*}
e^{-b}(\dot{\JJ}_1 +\dot{\JJ}_2)e^b =&\ \begin{pmatrix} \dot{I} + \omega_I^{-1}\dot{b} & \omega_I^{-1}\dot{\omega}_I\omega_I^{-1}\\[4pt] \dot{\omega}_I + \{\dot{b}, I\} &-(\dot{I}^*+\dot{b}\omega_I^{-1}) \end{pmatrix}\\
=&\ \tfrac{1}{2}\begin{pmatrix}\omega_J^{-1}KJ-\omega_I^{-1}KI &\omega_I^{-1}K\omega_I^{-1} + \omega_J^{-1}K\omega_J^{-1}\\[4pt] 4K-(I^*KI +J^*KJ)&I^*K\omega_I^{-1}-J^*K\omega_J^{-1} \end{pmatrix},\\[8pt]
e^{-b}(\dot{\JJ}_1 -\dot{\JJ}_2)e^b =&\ \begin{pmatrix} \dot{J} - \omega_J^{-1}\dot{b} & -\omega_J^{-1}\dot{\omega}_J\omega_J^{-1}\\[4pt] \{\dot{b}, J\}-\dot{\omega}_J &\dot{b}\omega_J^{-1}-\dot{J}^* \end{pmatrix}\\
=&\ \tfrac{1}{2}\begin{pmatrix} \omega_J^{-1}KI-\omega_I^{-1}KJ &-(\omega_I^{-1}K\omega_J^{-1} + \omega_J^{-1}K\omega_I^{-1})\\[4pt] -(I^*KJ +J^*KI)&J^*K\omega_I^{-1}-I^*K\omega_J^{-1} \end{pmatrix}.
\end{align*}
Turning to the top right entries of each expression shows 
\begin{align*}
\dot{\omega}_I &= \tfrac{1}{2}(K +\omega_I\omega_J^{-1}K\omega_J^{-1}\omega_I) =\tfrac{1}{2}( K + I^*J^*KJI) = \tfrac{1}{2}(K + I^*KI) = -\tfrac{1}{2}[K, I]I,\\
\dot{\omega}_J&=\tfrac{1}{2}(\omega_J\omega_I^{-1}K + K\omega_I^{-1}\omega_J)=-\tfrac{1}{2}(J^*I^*K +KIJ)=-\tfrac{1}{2}\{K, IJ\}.
\end{align*}

For the remaining data we differentiate the generalized metric $\GG_t = -(\JJ_1)_t(\JJ_2)_t= \begin{pmatrix}A_t&g^{-1}_t\\ \delta_t& A_t^* \end{pmatrix}$, where  $g_t$ is the associated metric and $b_t = -g_tA_t$. It follows that 
\begin{equation*}
e^{-b}\dot{\GG}e^b =\begin{pmatrix}\dot{A}-g^{-1}\dot{g}g^{-1}b&-g^{-1}\dot{g}g^{-1}\\-b\dot{A} + \dot{A}^*b + \dot{\delta}-bg^{-1}\dot{g}g^{-1}&\dot{A}^*-bg^{-1}\dot{g}g^{-1} \end{pmatrix}
\end{equation*}
and in particular we will only be interested in the first row.  Focusing on the top row, we furthermore compute
\begin{align*}
e^{-b}\dot{\GG}e^b&=-(e^{-b}\dot{\JJ}_1e^b)(e^{-b}\JJ_2e^b) -(e^{-b}\JJ_1e^b)(e^{-b}\dot{\JJ}_2e^b)\\
&=-\tfrac{1}{8}\begin{pmatrix}-(\omega_I^{-1}-\omega_J^{-1})K(I+ J)&(\omega_I^{-1}-\omega_J^{-1})K(\omega_I^{-1}- \omega_J^{-1})\\[4pt] 4K-(I^*+ J^*)K(I+ J)&(I^*+ J^*)K(\omega_I^{-1}-\omega_J^{-1}) \end{pmatrix}\begin{pmatrix}I-J& -(\omega_I^{-1}+\omega_J^{-1})\\ \omega_I+\omega_J& -(I^*-J^*) \end{pmatrix}\\
&-\tfrac{1}{8}\begin{pmatrix}I+J& -(\omega_I^{-1}-\omega_J^{-1})\\ \omega_I-\omega_J& -(I^*+J^*) \end{pmatrix}\begin{pmatrix}-(\omega_I^{-1}+\omega_J^{-1})K(I- J)&(\omega_I^{-1}+\omega_J^{-1})K(\omega_I^{-1}+ \omega_J^{-1})\\[4pt] 4K-(I^*- J^*)K(I- J)&(I^*- J^*)K(\omega_I^{-1}+\omega_J^{-1}) \end{pmatrix}\\
&=\tfrac{1}{2}\begin{pmatrix} g^{-1}K(I-J) + (\omega_I^{-1}-\omega_J^{-1})K &-(\omega_I^{-1}-\omega_J^{-1})Kg^{-1}-g^{-1}K(\omega_I^{-1}+\omega_J^{-1})\\ * &*\end{pmatrix}\\
&= \tfrac{1}{2}\begin{pmatrix}g^{-1}\{K, I\} & -(\omega_I^{-1}-\omega_J^{-1})Kg^{-1}-g^{-1}K(\omega_I^{-1}+\omega_J^{-1})\\ *&* \end{pmatrix}.
\end{align*}
Thus
\begin{align*}
\dot{g} =&\ \tfrac{1}{2}g(\omega_I^{-1}-\omega_J^{-1})K + \tfrac{1}{2}K(\omega_I^{-1}+\omega_J^{-1})g\\
=&\ \tfrac{1}{2}(I^*K-KI) -\tfrac{1}{2}(KJ+J^*K)\\
=&\ -\tfrac{1}{2}[K,I],
\end{align*}
where we have used that $K \in \Lambda^{1,1}_J$.  Then differentiating the formulas $I/J = -\omega_{I/J}^{-1}g$ gives
\begin{align*}
\dot{I}&=\omega_I^{-1}\dot{\omega}_I\omega_I^{-1}g-\omega_I^{-1}\dot{g}\\
&=-\tfrac{1}{2}\omega_I^{-1}[K,I]I\omega_I^{-1}g+\tfrac{1}{2}\omega_I^{-1}[K,I]=0,\\
\dot{J}&=\omega_J^{-1}\dot{\omega}_J\omega_J^{-1}-\omega_J^{-1}\dot{g}\\
&=-\tfrac{1}{2}\omega_J^{-1}\{K,IJ\}\omega_J^{-1}+\tfrac{1}{2}\omega_J^{-1}[K,I]\\
&=-\tfrac{1}{2}\omega_J^{-1}KI-\tfrac{1}{2}g^{-1}I^*KJ + \tfrac{1}{2}\omega_J^{-1}KI-\tfrac{1}{2}\omega_J^{-1}I^*K\\
&= \tfrac{1}{2}g^{-1}I^*J^*K -\tfrac{1}{2}g^{-1}J^*I^*K= \tfrac{1}{2}[I, J]g^{-1}K.
\end{align*} 
Lastly, differentiating the formula $b= -gA$ yields 
\begin{align*}
\dot{b}&= -\dot{g}A- g\dot{A}\\
&= -\dot{g}A-g(g^{-1}\dot{g}g^{-1} b + \tfrac{1}{2}g^{-1}\{K, I\})\\
&=-\tfrac{1}{2}\{K, I\}.
\end{align*}
\end{proof}
\end{prop}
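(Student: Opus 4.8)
The plan is to push the Gualtieri parametrization \eqref{f:Gualtierimap} through the defining ODE $\dot\JJ_{1/2} = \Phi_{K}(\JJ_{1/2})$. Writing $\JJ_{1/2} = e^b \Upsilon_{1/2} e^{-b}$ with $\Upsilon_{1/2}$ the matrix appearing in the proof of Proposition \ref{p:GKcompatibilityprop}, and using that $e^K$ commutes with $e^b$, conjugation of the defining equation gives the clean identity $e^{-b}\dot\JJ_{1/2}e^b = \Phi_K(\Upsilon_{1/2})$, whose $2\times 2$ block form was already recorded in the proof of Proposition \ref{p:GKcompatibilityprop}. On the other hand, differentiating $\JJ_{1/2} = e^b\Upsilon_{1/2}e^{-b}$ directly and using that $\dot{e}^b := \tfrac{d}{dt}e^{b}$ is strictly lower-triangular (so $e^b\dot{e}^b = \dot{e}^b = \dot{e}^b e^b$ and $\tfrac{d}{dt}e^{-b} = -\dot{e}^b$) expresses the same quantity $e^{-b}\dot\JJ_{1/2}e^b$ in terms of $[\dot{e}^b,\Upsilon_{1/2}]$ and $\dot\Upsilon_{1/2}$, i.e. linearly in $\dot I, \dot J, \dot\omega_I, \dot\omega_J$ and $\dot b$. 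Equating the two block-matrix expressions, and then taking the sum and the difference of the $+$ and $-$ cases, produces the linear system to be solved. Throughout I will use the compatibility condition $K \in \Lambda^{1,1}_{J}$, equivalently $KJ = -J^*K$, which holds by Lemma \ref{l:GKvariationlemma} together with Proposition \ref{p:GKcompatibilityprop}, as well as the standard identities $\omega_I = gI$, $\omega_J = gJ$ and $I^2 = J^2 = -1$.

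I would then extract the formulas in the following order. The top-right blocks of the sum and of the difference give $\dot\omega_I = \tfrac12(K + \omega_I\omega_J^{-1}K\omega_J^{-1}\omega_I)$ and $\dot\omega_J = \tfrac12(\omega_J\omega_I^{-1}K + K\omega_I^{-1}\omega_J)$; rewriting via $\omega_I^{-1}g = -I$, $\omega_J^{-1}g = -J$ and invoking $KJ = -J^*K$ collapses these to $\dot\omega_I = -\tfrac12[K,I]I$ and $\dot\omega_J = -\tfrac12\{K,IJ\}$. For $\dot g$ and $\dot b$ it is cleanest to differentiate the generalized metric $\GG = -\JJ_1\JJ_2 = \begin{pmatrix} A & g^{-1} \\ \delta & A^*\end{pmatrix}$, where $b = -gA$: expanding $e^{-b}\dot\GG e^b = -(e^{-b}\dot\JJ_1 e^b)(e^{-b}\JJ_2 e^b) - (e^{-b}\JJ_1 e^b)(e^{-b}\dot\JJ_2 e^b)$ and reading off the top-right block, which equals $-g^{-1}\dot g g^{-1}$, gives after substitution and one further use of $K \in \Lambda^{1,1}_J$ the formula $\dot g = -\tfrac12[K,I]$; the top-left block $\dot A - g^{-1}\dot g g^{-1}b$ then determines $\dot A$, whence $\dot b = -\dot g A - g\dot A = -\tfrac12\{K,I\}$. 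Finally, differentiating $I = -\omega_I^{-1}g$ and $J = -\omega_J^{-1}g$ and substituting the formulas already obtained yields $\dot I = \omega_I^{-1}\dot\omega_I\omega_I^{-1}g - \omega_I^{-1}\dot g = 0$ and $\dot J = \omega_J^{-1}\dot\omega_J\omega_J^{-1}g - \omega_J^{-1}\dot g = \tfrac12[I,J]g^{-1}K$.

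The computation is mechanical once it is set up this way, so the only real obstacle is bookkeeping: carrying through the $2\times 2$ block products defining $\Phi_K(\Upsilon_{1/2})$ and $e^{-b}\dot\GG e^b$, and correctly commuting $K$ past $I, J, \omega_I, \omega_J$ using the identities above. The one place where genuine care is needed is pinning down exactly where the $(1,1)$ hypothesis on $K$ enters: it is precisely what turns the a priori unwieldy combinations such as $g(\omega_I^{-1}-\omega_J^{-1})K + K(\omega_I^{-1}+\omega_J^{-1})g$ into the tidy $-[K,I]$ (and analogously in the $\dot\omega_I$, $\dot\omega_J$ reductions), and without it none of the stated formulas would close.
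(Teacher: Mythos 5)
Your proposal is correct and follows essentially the same route as the paper: conjugate the defining ODE by $e^{-b}$ to identify $e^{-b}\dot{\JJ}_{1/2}e^b$ with the block matrix $\Phi_K(\Upsilon_{1/2})$ from Proposition \ref{p:GKcompatibilityprop}, equate with the direct differentiation of $\JJ_{1/2}=e^b\Upsilon_{1/2}e^{-b}$, take sums and differences to read off $\dot{\gw}_I$ and $\dot{\gw}_J$, extract $\dot{g}$ and $\dot{b}$ from the generalized metric $\GG=-\JJ_1\JJ_2$, and deduce $\dot I$, $\dot J$ from $I/J=-\gw_{I/J}^{-1}g$. You also correctly pinpoint that the hypothesis $K\in\Lambda^{1,1}_J$ is exactly what closes the computation, which is where the paper invokes it as well.
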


Using Proposition \ref{p:GKvariation} we give the proof of Corollary \ref{c:Poissoncor}.

\begin{proof}[Proof of Corollary \ref{c:Poissoncor}]
By Proposition \ref{f:BHvariation} we know that $\dot{I} = 0$, and moreover it follows that 
\begin{align*}
\dot{\sigma}&=\tfrac{1}{2}\left(I[I, J]g^{-1}K -[I, J]g^{-1}KI\right)g^{-1}+\tfrac{1}{2}[I, J]g^{-1}[K,I]g^{-1} \\
&=\tfrac{1}{2}I[I,J]g^{-1}Kg^{-1}-\tfrac{1}{2}[I,J]g^{-1}I^*Kg^{-1}\\
&=\tfrac{1}{2}\left(I[I,J]+[I,J]I\right)g^{-1}Kg^{-1}\\
&=0.
\end{align*}
\end{proof}

\subsection{Examples} \label{ss:examples}

\begin{ex} \label{e:KC} Given $(M^{2n}, g, J)$ a K\"ahler manifold, we interpret this as a generalized K\"ahler structure by setting $I = J$, and $b = 0$.  Suppose $(g_t, b_t, I_t, J_t)$ is a canonical family with this initial condition.  Initially we have $\gs = 0$, thus $\gs_t \equiv 0$ from Corollary \ref{c:Poissoncor}.  It follows that $[I, J] \equiv 0$ for all times, so from the equations of Proposition \ref{p:GKvariation} it follows that that $\dot{J} \equiv 0$ for all times, and so $J_t \equiv J = 0$.  In turn it follows easily that
\begin{align*}
\dot{\gw}_I = \dot{\gw}_J = K,
\end{align*}
in other words, the complex structures stay fixed and the K\"ahler forms change by $K$.  By the $\i\del\delb$-Lemma, an exact canonical deformation satisfies $K = da = d Jd u$ for some $u \in C^{\infty}(M)$.  Using the construction above and the normalization that $\i \del \delb = d I d$ we verify item (1) of Proposition \ref{p:examplesprop}, noting that positivity of the K\"ahler forms $(\gw_I)_t = \gw_I + t d J d u$ is equivalent to the positivity condition (3) in Theorem \ref{t:maindeformationthm}.
\end{ex}

\begin{ex} \label{e:CC} Suppose $(M^{2n}, g, b, I, J)$ is a generalized K\"ahler structure and $[I, J] \equiv 0$. These manifolds are characterized by a holomorphic splitting of the tangent bundle and simple examples are given by quotients of products, for instance on Hopf surfaces.  For further background on these structures see (\cite{ApostolovGualtieri}).  Setting $Q= - IJ$ we see that $Q^2 = 1$ and so we can split $T = T_+ \oplus T_-$ in terms of the $\pm1$-eigenbundles of $Q$.  Let $J_+ =\left. I\right|_{T_+}$ and $J_- =\left. I\right|_{T_-}$.  Since on $T_{\pm}$ we have $I = \pm J$, it follows $\left. J\right|_{T_+}= J_+$ and $\left. J\right|_{T_-}= - J_-$. Similarly, we define $\omega_{\pm} = \left.\omega_I\right|_{T_{\pm}}$.  Note that the K\"ahler form indeed block diagonalizes along $T_{\pm}$ since
\begin{align*}
\omega_I(X_+, IY_-) = g(IX_+, IY_-)= g(JX_+, JY_-) = g(-IX_+, IY_-) = -\omega_I(X_+, IY_-) = 0.
\end{align*}
Now suppose $(g_t, b_t, I_t, J_t)$ is a canonical family with this initial condition.  Arguing as above, since $\gs = 0$ for the given structure, it follows that $[I, J] \equiv 0$ for all times $t$, and hence $\dot{J} = 0$.  Thus along the variation we preserve the splitting induced by $Q$, and we can decompose $K = K_{++} + K_{+-} + K_{-+} + K_{--}$.  Tracing through the formulas in Proposition \ref{p:GKvariation} yields
\begin{align*}
\dot{g} =&\ \begin{pmatrix}J_+^*K_{++}&0\\0& J_-^*K_{--} \end{pmatrix},\qquad \dot{b}= \begin{pmatrix}0&-J_+^*K_{-+} \\ -J_-^*K_{+-}&0 \end{pmatrix},\\
\dot{\omega}_I =&\ \begin{pmatrix} K_{++}&0\\0&K_{--}\end{pmatrix}, \qquad \dot{\omega}_J =\begin{pmatrix} -K_{++}&0\\0&K_{--}\end{pmatrix}.
\end{align*}
A special case of this occurs when $K = d J d u$, yielding
\begin{align*}
\dot{\gw}_I =&\ \i \left( \del_+ \delb_+ - \del_- \delb_- \right) u,
\end{align*}
where $d = \del_+ + \del_- + \delb_+ + \delb_-$ is the fourfold splitting of $d$ induced by $Q$.  This construction verifies item (2) of Proposition \ref{p:examplesprop}, again noting that positivity of
\begin{align*}
(\gw_I)_t = \gw_I + t \i \left(\del_+ \delb_+ - \del_- \delb_- \right) u
\end{align*}
is equivalent to the positivity condition of item (3) in Theorem \ref{t:maindeformationthm}.
\end{ex}

\begin{ex} \label{e:NDcase} Suppose $(M^{2n}, g, b, I, J)$ is a generalized K\"ahler structure where the associated Poisson tensor $\gs$ is nondegenerate.  In this setting the endomorphism $[I,J]$ is invertible, and this in turn implies that $I \pm J$ are invertible.  We define the 2-forms $F_{\pm} = -2g(I \pm J)^{-1}$.  Moreover let $\Omega = \gs^{-1}$. It turns out that the three symplectic forms $F_{\pm}, \Omega$ completely determine the generalized K\"ahler structure in this case.   Direct computations show that the generalized complex structures can be expressed as
\begin{align*}
\JJ_1=e^{-4\Omega}\begin{pmatrix}0 & -F_-^{-1}\\F_-&0 \end{pmatrix}e^{4\Omega}, \quad \JJ_2 = \begin{pmatrix} 0  & -F_+^{-1}\\ F_+&0 \end{pmatrix},
\end{align*}
Let $K$ be an infinitessmal deformation of the generalized Kahler pair $(\JJ_1, \JJ_2)$.  Then with respect to the data $(F_+, F_-, \Omega)$, direct computations show that
\begin{align*}
\dot{F}_+ = \dot{F}_- = K, \qquad \dot{\Omega} = 0.
\end{align*}
As in the above examples we can choose $K_t = d J_t d u_t$.  We claim that for such a variation,
\begin{align*}
\dot{J} = L_{\gs d u} J.
\end{align*}
To show this we compute, using that $\gs = \Omega^{-1}$ and $d \Omega = 0$,
\begin{align*}
\left(L_{\gs d u} J \right) \Omega =&\ L_{\gs d u} (J \Omega) - J L_{\gs d u} \Omega = d J d u = K.
\end{align*}
Comparing against Proposition \ref{p:GKvariation} we see that
\begin{align*}
\dot{J} = \gs K = L_{\gs d u} J,
\end{align*}
as required.  It follows that $J_t = \phi_t^* J_0$, where $\phi$ is the one-parameter family of $\Omega$-Hamiltonian diffeomorphisms driven by $u_t$.  As discussed in \cite{ASNDGKCY}, the positivity of $- \Img \pi_{\Lambda_{I}^{1,1}} \Omega_J$ is equivalent to the positivity condition (3) of Theorem \ref{t:maindeformationthm}.  This verifies item (3) of Proposition \ref{p:examplesprop}.
\end{ex}

\begin{proof}[Proof of Proposition \ref{p:examplesprop}] The three deformations claimed in the proposition are described in the examples above.
\end{proof}

\section{Generalized K\"ahler-Ricci flow as canonical deformation} \label{s:GKRF}

In this section we establish Theorem \ref{t:GKflowthm}, namely that solutions to the generalized K\"ahler-Ricci flow are canonical deformations, driven by the Bismut Ricci curvature.  This generalizes and unifies various instances of this phenomenon which have previously been observed.  In particular, it is well known that K\"ahler-Ricci flow moves within the K\"ahler cone against a fixed complex structure, and so is a canonical deformation as in Example  \ref{e:KC}.  Next, comparing Example \ref{e:CC} against the curvature identities of (\cite{StreetsCGKFlow}), we see that the generalized K\"ahler-Ricci flow in the case $[I, J] = 0$ is a canonical deformation driven by $\rho_I$.  Also, we can compare the discussion in Example \ref{e:NDcase} with (\cite{ASNDGKCY}) to see the phenomenon holds in the nondegenerate case.  The
key point in establishing the general case is to show that the evolution of $J$ is indeed determined by $K = \rho_I$.  This requires a delicate curvature identity we build up below.  

\begin{lemma} \label{l:BismutRicci} (\cite{IvanovPapa}) Let $(M^{2n}, g, I)$ be a pluriclosed 
structure.  Then
\begin{gather} \label{f:BismutRicci}
 \begin{split}
  \rho_I(X,Y) =&\ - \Rc^B(X,IY) + \N^B_X \theta (IY)\\
  \rho^{1,1}_I(\cdot,I \cdot) =&\ \Rc^g - \tfrac{1}{4} H^2 - \tfrac{1}{2} L_{\theta^{\sharp}} g,\\
  \rho_I^{2,0 + 0,2}(X,Y) =&\ \tfrac{1}{2} \left( d^* H(IX, Y) + d^{\N} \theta(IX, Y) \right)\\
  =&\ \left(d I \theta \right)^{2,0 + 0,2}(X, Y).
  \end{split}
\end{gather}
\end{lemma}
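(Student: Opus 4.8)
The plan is to establish the three displayed identities in Lemma \ref{l:BismutRicci} by unpacking the definitions of the Bismut connection, the Bismut-Ricci tensor $\rho_I = \tfrac12 \tr \Omega_I I$, and the Lee form $\theta_I$, and then comparing against the Levi-Civita data. Since this lemma is quoted from Ivanov--Papadopoulos \cite{IvanovPapa}, the natural approach is to reduce everything to known Bismut curvature identities on a Hermitian manifold and then specialize to the pluriclosed case $dH = 0$; I would not attempt to re-derive the general Bismut Bianchi identities from scratch but rather cite them and do the pluriclosed specialization carefully.

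First I would record the structural facts: $\N^B = D + \tfrac12 Hg^{-1}$ with $H = d^c_I\gw_I = -d\gw_I(I\cdot,I\cdot,I\cdot)$ (up to sign conventions), so $\N^B$ has torsion $T = Hg^{-1}$ which is totally skew; and $\N^B$ is Hermitian, i.e. $\N^B g = \N^B I = 0$. From this the Bismut curvature $\Omega_I$ lies in $\Lambda^2 \otimes \mathfrak{u}(n)$, so $\rho_I(X,Y) = \tfrac12\tr(\Omega_I(X,Y)I)$ is a closed 2-form representing $2\pi c_1$ (closedness is the Bianchi identity — this is where $dH=0$ enters). The first identity $\rho_I(X,Y) = -\Rc^B(X,IY) + \N^B_X\theta(IY)$ then follows from the standard relation between the "trace-with-$I$" of the full curvature and the Ricci contraction of the curvature of a connection with skew torsion; concretely one uses the two curvature traces $\Rc^B_{ij} = g^{kl}\Omega_{kilj}$ and $\rho_{ij} = \tfrac12 I^{kl}\Omega_{ijkl}$ and the first Bianchi identity for $\N^B$ (which has a torsion-correction term involving $dT = dH = 0$ plus $\N^B T$), together with the identity $d^*\gw_I = -\iota_\theta g$ expressed via $H$.

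Second, for the $(1,1)$-part identity $\rho_I^{1,1}(\cdot, I\cdot) = \Rc^g - \tfrac14 H^2 - \tfrac12 L_{\theta^\sharp}g$, I would expand $\Rc^B$ in terms of $\Rc^g$: for a metric connection with totally skew torsion $T$, one has the standard formula $\Rc^B(X,Y) = \Rc^g(X,Y) - \tfrac14\langle \iota_X H, \iota_Y H\rangle - \tfrac12 (d^*H)(X,Y) + (\text{skew terms})$, where $H^2$ denotes the symmetric $2$-tensor $H^2(X,Y) = \langle \iota_X H,\iota_Y H\rangle$. Symmetrizing the first identity (the $(1,1)$-part of $\rho_I$ corresponds to the part of $\rho_I(\cdot,I\cdot)$ that is symmetric) and substituting this expansion, the $\N^B_X\theta(IY)$ term symmetrizes to $\tfrac12 L_{\theta^\sharp}g$ up to the torsion contribution, and the $d^*H$ terms cancel against the skew part; the $\tfrac14 H^2$ survives. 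Third, the $(2,0)+(0,2)$-part identity is the antisymmetric counterpart: the antisymmetrization of $-\Rc^B(X,IY)+\N^B_X\theta(IY)$ produces $\tfrac12(d^*H(IX,Y) + d^\N\theta(IX,Y))$ directly from the torsion terms, and the final equality with $(dI\theta)^{2,0+0,2}$ uses that on a pluriclosed manifold $dI\theta$ has the stated $(2,0)+(0,2)$ component — this is essentially the content of $\rho_I$ being closed combined with $\partial\bar\partial$-type bookkeeping.

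The main obstacle I expect is sign/normalization bookkeeping: the conventions for $H = d^c_I\gw_I$ versus $-d^c_I\gw_I$, the factor in $\N^B = D \pm \tfrac12 Hg^{-1}$, and the precise meaning of $H^2$ and of $d^\N\theta$ all have to be pinned down so that the three formulas are mutually consistent and consistent with the uses made later (in Proposition \ref{p:sigchern2} and the proof of Theorem \ref{t:GKflowthm}). Since the statement is attributed to \cite{IvanovPapa}, the honest and efficient route is to cite that reference for the underlying Bismut curvature identities on Hermitian manifolds and present only the short specialization argument: impose $dH=0$, take symmetric and skew parts of the first identity, and match terms. I would therefore present the proof as: (i) recall $\N^B$ is Hermitian with skew torsion $T = Hg^{-1}$; (ii) quote the general Bismut-Ricci identity and Bianchi identity from \cite{IvanovPapa}; (iii) derive line one by contracting; (iv) take the symmetric part to get line two, using the skew-torsion Ricci expansion and $d^*\gw_I = \theta\circ I$; (v) take the skew part to get line three, and identify it with $(dI\theta)^{2,0+0,2}$ using $dH=0$.
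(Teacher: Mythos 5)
Your approach is consistent with the paper, which in fact gives \emph{no} proof of this lemma at all: it is stated purely as a citation to \cite{IvanovPapa}, so deferring to that reference is exactly what the authors do. Your additional sketch --- obtaining the second and third lines as the symmetric and skew parts of the first identity via the skew-torsion expansion of $\Rc^B$ --- is a reasonable and standard organization of the derivation and supplies more detail than the paper itself.
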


\begin{prop} \label{p:sigchern2} Let $(M^{2n}, g, I, J)$ be a generalized K\"ahler structure.  Then
\begin{align*}
g \left( \left( L_{\theta_J^{\sharp} - \theta_I^{\sharp}} J \right) X, Y \right)=&\ \rho_I([I,J] X, Y).
\end{align*}
\begin{proof} We will use that for a Hermitian manifold $(M^{2n}, g, J)$  one has
\begin{align} \label{f:Liederiv}
g( (L_X J)Y, Z) =&\ g( (D_X J)Y - D_{JY} X + J D_Y X, Z).
\end{align}
First note, using that $\rho_I \in \Lambda^{1,1}_J$,
\begin{align*}
\rho_I([I,J]X, JY) &+ \rho_I([I,J]JY, X)\\
=&\ \rho_I( IJ X - JI X, JY) - \rho_I (X, IJJ Y - JIJ Y)\\
=&\ \rho_I(IJ X, JY) - \rho_I(I X, Y) + \rho_I(X, IY) - \rho_I(JX, IJY)\\
=&\ - \rho_I(JY, IJX) + \rho_I(Y, IX) + \rho_I(X, IY) - \rho_I(JX, IJY).
\end{align*}
Applying Lemma \ref{l:BismutRicci} we further compute
\begin{align*}
- \rho_I(JY, IJX)& + \rho_I(Y, IX) + \rho_I(X, IY) - \rho_I(JX, IJY)\\
=&\ - \Rc^I(JY, JX) + \N^I_{JY} \theta^I(JX) + \Rc^I(Y, X) - \N^I_Y \theta^I(X)\\
&\ + \Rc^I(X, Y) - \N^I_X \theta^I(Y) - \Rc^I(JX, JY) + \N^I_{JX} \theta^I(JY)\\
=&\ - \Rc^J(JY, JX) - \Rc^J(JX, JY) + \Rc^J(X,Y) + \Rc^J(Y,X)\\
&\ + \N^I_{JY} \theta^I(JX) - \N^I_Y \theta^I(X) - \N^I_X \theta^I(Y) + \N^I_{JX} \theta^I(JY)\\
=&\ \rho_J(JY, X) + \rho_J(JX, Y) + \rho_J(X,JY) + \rho_J(Y, JX)\\
&\ -\N^J_{JY} \theta^J(JX) - \N^J_{JX} \theta^J(JY) + \N^J_X \theta^J(Y) + \N^J_Y \theta^J(X) \\
&\ + \N^I_{JY} \theta^I(JX) - \N^I_Y \theta^I(X) - \N^I_X \theta^I(Y) + \N^I_{JX} \theta^I(JY)\\
=&\  D_X(\theta^J - \theta^I)(Y) + D_Y (\theta^J - \theta^I)(X) - D_{JX} \left(\theta^J - \theta^I \right) (JY) - D_{JY} \left(\theta^J - \theta^I \right)(JX)\\
=&\ g \left( \left(L_{\theta_J^{\sharp} - \theta_I^{\sharp}} J \right) X, JY \right) + g\left( \left(L_{\theta_J^{\sharp} - \theta_I^{\sharp}} J \right) JY, X \right).
\end{align*}
where the last line follows by comparing (\ref{f:Liederiv}).

To address the skew symmetric piece we first compute, again using that $\rho_I \in \Lambda^{1,1}_J$,
\begin{align*}
\rho_I & \left( [I,J] X, Y \right) - \rho_I \left( [I,J]Y, X \right)\\
=&\ \rho_I((IJ - JI) X, Y) + \rho_I(X,(IJ - JI)Y)\\
=&\ \rho_I(IJ X, Y) + \rho_I(IX, JY) + \rho_I(X, IJ Y) + \rho_I(JX, IY)\\
=&\ 2 \left(\left( \rho_I \right)^{2,0}(JX, IY) + \left( \rho_I \right)^{2,0}(IX, JY) \right).
\end{align*}
Now we compute using Lemma \ref{l:BismutRicci} and equation (\ref{f:Liederiv}),
\begin{align*}
2 & \left(\left( \rho_I \right)^{2,0}(JX, IY) + \left( \rho_I \right)^{2,0}(IX, JY) \right)\\
=&\ 2 \left(\left( - \rho_I \right)^{2,0}(IY,JX) + \left( \rho_I \right)^{2,0}(IX, JY) \right)\\
=&\ d^* H_I(Y, JX) - d^* H_I(X, JY) + d^{\N^I} \theta^I(Y, JX) - d^{\N^I} \theta^I (X, JY)\\
=&\ - d^* H_J (Y, JX) + d^* H_J(X, JY) + d^{\N^I} \theta^I(Y, JX) - d^{\N^I} \theta^I (X, JY)\\
=&\ 2 (\rho_J)^{2,0} (X,Y) - d^{\N^J} \theta^J(JX, Y) - 2 (\rho_J)^{2,0}(Y,X) + d^{\N^J} \theta^J(JY, X)\\
&\ + d^{\N^I} \theta^I(Y, JX) - d^{\N^I} \theta^I (X, JY)\\
=&\  d^{\N^J} \theta^J(JX, Y) - d^{\N^J} \theta^J(JY, X) + d^{\N^I} \theta^I(Y, JX) - d^{\N^I} \theta^I (X, JY)\\
=&\ g \left( \left( L_{\theta_J^{\sharp} - \theta_I^{\sharp}} J \right) X, Y \right) -  g \left( \left( L_{\theta_J^{\sharp} - \theta_I^{\sharp}} J \right) Y, X \right),
\end{align*}
as required.
\end{proof}
\end{prop}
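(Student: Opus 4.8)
The plan is to prove the identity by comparing both sides, as bilinear forms on $TM$, and matching their symmetric and skew-symmetric parts separately. The ingredients I would use are: (i) the structural fact, standard in generalized K\"ahler geometry, that $\rho_I \in \Lambda^{1,1}_J$; (ii) the relation $H_I := d^c_I \gw_I = - d^c_J \gw_J =: - H_J$, which gives $H_I^2 = H_J^2$ and $d^* H_I = - d^* H_J$, so that the symmetric parts of the Bismut-Ricci tensors of $(g,I)$ and $(g,J)$ both equal $\Rc^g - \tfrac14 H^2$ and in particular agree, while their skew parts are opposite; (iii) the Bismut-Ricci identities of Lemma \ref{l:BismutRicci}, applied to the data of both $I$ and $J$; (iv) the fact that $\N^I = D + \tfrac12 g^{-1} H_I$ and $\N^J = D + \tfrac12 g^{-1} H_J$, so that appropriate combinations of $\N^I$- and $\N^J$-derivatives lose their torsion contributions and reduce to Levi-Civita derivatives; and (v) the elementary Hermitian identity $g\big((L_X J)Y, Z\big) = g\big((D_X J)Y - D_{JY}X + J D_Y X, Z\big)$, which expresses the left-hand side through $D$.

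First I would observe that both bilinear forms $g\big((L_{\theta_J^{\sharp} - \theta_I^{\sharp}} J)\,\cdot\,,\cdot\,\big)$ and $\rho_I([I,J]\,\cdot\,,\cdot\,)$ are anti-invariant under $J$ (for the latter this uses $[I,J]J = -J[I,J]$ together with (i)). For such a form $\beta$ one has $\beta(X,JY) + \beta(JY,X) = 2\beta_{\mathrm{sym}}(X,JY)$, so matching the symmetric parts of the two sides is equivalent to proving
\begin{align*}
\rho_I([I,J]X, JY) + \rho_I([I,J]JY, X) = g\big((L_{\theta_J^{\sharp} - \theta_I^{\sharp}} J)X, JY\big) + g\big((L_{\theta_J^{\sharp} - \theta_I^{\sharp}} J)JY, X\big).
\end{align*}
To establish this I would expand $[I,J] = IJ - JI$ on the left, commute the factors $I$ and $J$ through $\rho_I$ using (i), and substitute the first line of Lemma \ref{l:BismutRicci} written in the $I$-data. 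The skew part of $\Rc^B_I$ drops out of the resulting combination for parity reasons, leaving only its symmetric part, which by (ii) equals that of $\Rc^B_J$; re-applying the first line of Lemma \ref{l:BismutRicci} in the $J$-data converts this into $\rho_J$ and $\N^J \theta_J$ terms, the $\rho_J$ terms cancel, and by (iv) the surviving $\N^I \theta_I$ and $\N^J \theta_J$ terms collapse to Levi-Civita derivatives of $\theta_J - \theta_I$, which by (v) is exactly the right-hand side.

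For the skew-symmetric parts I would compute $\rho_I([I,J]X,Y) - \rho_I([I,J]Y,X)$, expand $[I,J]$, and use (i) to reduce it to the $I$-$(2,0)+(0,2)$ component of $\rho_I$. Substituting the third line of Lemma \ref{l:BismutRicci}, $\rho_I^{2,0+0,2}(X,Y) = \tfrac12\big(d^* H_I(IX,Y) + d^{\N^I}\theta_I(IX,Y)\big)$, and then rewriting $d^* H_I = -d^* H_J$ and re-applying that line in the $J$-data reintroduces $\rho_J^{2,0+0,2}$ and $d^{\N^J}\theta_J$ terms; the $\rho_J^{2,0+0,2}$ contributions cancel pairwise by antisymmetry, and the remaining $d^{\N^I}\theta_I$ and $d^{\N^J}\theta_J$ terms combine — their torsion pieces matching up by (iv) — into the skew part of $g\big((L_{\theta_J^{\sharp} - \theta_I^{\sharp}} J)\,\cdot\,,\cdot\,\big)$ via (v). Adding the symmetric and skew identities gives the proposition.

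I expect the main obstacle to be purely computational: keeping careful track of the factors $I$ and $J$ as they are moved through $\rho_I$ via its $J$-$(1,1)$ property, and verifying the precise cancellations between the $I$- and $J$-versions of the Bismut-Ricci and Lee-form terms. The only genuinely conceptual point is that these cancellations occur at all, and this is forced by $H_I = -H_J$ together with $\rho_I \in \Lambda^{1,1}_J$; once everything is expanded in the bihermitian data $(g,I,J)$ and Lemma \ref{l:BismutRicci} is invoked, the identity essentially assembles itself.
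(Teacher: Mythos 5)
Your proposal follows essentially the same route as the paper's proof: decompose both sides into symmetric and skew-symmetric parts, use $\rho_I \in \Lambda^{1,1}_J$ to move $I$ and $J$ through $\rho_I$, apply Lemma \ref{l:BismutRicci} in both the $I$- and $J$-Hermitian data with $H_I = -H_J$ forcing the cancellations, and conclude via the Lie derivative formula (\ref{f:Liederiv}). The plan is correct; the only addition beyond the paper's argument is your explicit remark that both bilinear forms are $J$-anti-invariant, which the paper uses implicitly.
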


\begin{proof}[Proof of Theorem \ref{t:GKflowthm}]
We first assume that equations (\ref{f:GKRFGG}) hold.  By Proposition \ref{p:GKvariation} with $K = \rho_I$ one notes that $I$ is fixed and one easily derives the evolution equation $\dot{\gw}_I = - (\rho_I)^{1,1}$.  By Proposition \ref{p:sigchern2} it follows that
\begin{align*}
\dot{J} = \tfrac{1}{2} [I,J] g^{-1} \rho_I = L_{\tfrac{1}{2} \left(\theta_J^{\sharp} - \theta_I^{\sharp}\right)} J.
\end{align*}
as required.  Thus we have verified the evolution equations of (\ref{f:GKRFBH}).  Assuming equations (\ref{f:GKRFBH}) and imposing $\dot{b} = - \{\rho_B^I, I\}$, we can apply the computations in Proposition \ref{p:GKvariation} and use Proposition \ref{p:sigchern2} to establish equations (\ref{f:GKRFGG}).
\end{proof}

\bibliographystyle{acm}

\end{document}